\newcommand{\ds}{\displaystyle}
\newcommand{\R}{\mathbb{R}}
\newcommand{\ra}{\rightarrow}
\newtheorem{theorem}{Theorem}
\newtheorem{lemma}[theorem]{Lemma}
\newtheorem{proposition}[theorem]{Proposition}
\newtheorem{corollary}[theorem]{Corollary}
\def\I{{\mathcal I}}
\newcommand{\Om}{\Omega}
\newcommand{\lb}{\lambda}
\newcommand{\sm}{\setminus}
\newcommand{\ov}{\overline}
\newcommand{\vphi}{\varphi}
\newcommand{\vps}{\varepsilon}
\newcommand{\bp}{\begin{proof}}
\newcommand{\ep}{\end{proof}}
\begin{document}
\title{On the torsion function for simply connected, open sets in $\R^2$.}

\author{{M. van den Berg\footnote{corresponding author}} \\
School of Mathematics, University of Bristol\\
Fry Building, Woodland Road\\
Bristol BS8 1UG\\
United Kingdom\\
\texttt{mamvdb@bristol.ac.uk}\\
\\
{D. Bucur}\\
Laboratoire de Math\'ematiques, Universit\'e Savoie Mont Blanc \\
UMR CNRS  5127\\
Campus Scientifique,
73376 Le-Bourget-Du-Lac\\
France\\
\texttt{dorin.bucur@univ-savoie.fr}}
\date{7 November 2024}\maketitle
\vskip 1.5truecm \indent
\begin{abstract}\noindent
For an open set $\Om \subset \R^2$  let $\lambda(\Om)$ denote  the bottom of the spectrum of the Dirichlet Laplacian acting in $L^2(\Om)$.  Let $w_\Om$ be the torsion function for $\Om$, and let $\|.\|_p$ denote the $L^p$ norm. It is shown that there exist {$\eta_1>0,\eta_2>0$} such that { (i) $\|w_{\Om}\|_{\infty} \lambda(\Om)\ge 1+\eta_1$ for any non-empty, open, simply connected set $\Om\subset \R^2$ with $\lb(\Om) >0$, (ii)
$\|w_{\Om}\|_1\lambda(\Om)\le {(1-\eta_2)}|\Om|$ for any non-empty, open, simply connected set $\Om\subset\R^2$ with finite measure $|\Om|$.}
\end{abstract}
\vskip 1.5truecm \noindent \ \ \ \ \ \ \ \  { Mathematics Subject
Classification (2020)}: 35J25,35P15,35P99.
\begin{center} \textbf{Keywords}: Torsion function, first Dirichlet eigenfunction, simply connected.
\end{center}
\mbox{}
\section{Introduction\label{sec1}}
Let $\Omega$ be a non-empty, open set in Euclidean space $\R^m$, and let $\lambda(\Om)$ denote the bottom of the spectrum of the Dirichlet Laplacian acting in $L^2(\Om)$. That is
$$\lb(\Om)= \inf\Big\{\frac{\int_\Om |\nabla u|^2 }{\int_\Om u^2} : u \in H^1_0(\Om), u \not \equiv 0\Big\}.$$
If the measure $|\Om|$ of $\Om$ is finite, then $\lb(\Om)$ is the fundamental eigenvalue.

We define the torsion function for arbitrary non-empty open sets $\Om$ (with either finite or infinite measure) by a limiting procedure. Let  $B_r$ denote the ball of radius $r$ centred at the origin, and
define $w_\Om:\Om \ra [0, +\infty]$  for a.e. $x \in \R^m$ by $w_\Om(x) = \lim_ {r \ra +\infty}w_{\Om \cap B_r}(x)$, where $w_{\Om \cap B_r}(x)$ is the unique solution of
\begin{equation*}
-\Delta w=1,\,\,w\in H_0^1(\Om\cap B_r).
\end{equation*}
If $\Om$ has finite measure, then $w_\Om$ is the classical torsion function, and the convergence $w_{\Om \cap B_r} \to w_\Om$ is strong in $H^1$. If  $\Om$ has infinite measure, {then} $w_\Om$ is a well-defined, non-negative Borel function, and possibly infinite valued.
If $w_{\Om}$ is finitely valued on $\Om$, then $w_{\Om}$ is a weak solution, in the sense of distributions, of  $-\Delta w=1$ on $\Om$, which satisfies $w|_{\partial\Om}=0$ at all regular points of $\partial \Om$.  Moreover, if $G_{\Om}(x,y),\, x\in \Om,\,y\in \Om$ is the kernel of the resolvent of the Dirichlet Laplacian, then
\begin{equation}\label{e1b}
w_{\Om}(x)=\int_{\Om}dy\,G_{\Om}(x,y).
\end{equation}

It is known that $\lb(\Om) >0$ is equivalent to $w_\Om \in L^\infty(\Om)$ (see  \cite[Theorem 1]{vdBC}). Moreover, as soon as $\lb(\Om)>0$, the following inequality holds
\begin{equation}\label{e2}
1{  <} \|w_{\Om}\|_{\infty}\lambda(\Om)\le c_m
\end{equation}
 and the constant $c_m$ satisfies, see \cite[Theorem 1]{vdBC},
\begin{equation*}
c_m\le 4+3m\log 2.
\end{equation*}

A proof of the lower bound in \eqref{e2} can be found in \cite[Theorem 5.3]{vdB}. It was shown in \cite[Theorem 1]{MvdB} and \cite[Theorem 3.3]{HLP} that if $m\ge 2$ the lower bound $1$ in \eqref{e2} is sharp:
for every $\varepsilon>0$ there exists a non-empty open set $\Om_{\varepsilon}\subset \R^m$ such that
\begin{equation}\label{e4}
\|w_{\Om_\vps}\|_{\infty}\lambda(\Om_\vps)<1+\varepsilon.
\end{equation}
An example of such an  open set $\Om_{\varepsilon}$ is an open cube with side-length $1$ punctured with $N^m$ balls positioned in a periodic order and with {suitably} chosen small radii. See Figure 1.
\begin{figure}[!ht]

\centering
\begin{tikzpicture}

\draw[black, thin] (0.25,0.25) -- (4.75,0.25);
\draw[black, thin] (4.75,0.25) -- (4.75,4.75);
\draw[black, thin] (4.75,4.75)-- (0.25,4.75);
\draw[black, thin] (0.25,4.75) -- (0.25,0.25);
\filldraw[black] (0.5,0.5) circle (1pt);
\filldraw[black] (0.5,1.0) circle (1pt);
\filldraw[black] (0.5,1.5) circle (1pt);
\filldraw[black] (0.5,2.0) circle (1pt);
\filldraw[black] (0.5,2.5) circle (1pt);
\filldraw[black] (0.5,3.0) circle (1pt);
\filldraw[black] (0.5,3.5) circle (1pt);
\filldraw[black] (0.5,4.0) circle (1pt);
\filldraw[black] (0.5,4.5) circle (1pt);
\filldraw[black] (0.5,0.5) circle (1pt);

\filldraw[black] (1.0,0.5) circle (1pt);
\filldraw[black] (1.0,1.0) circle (1pt);
\filldraw[black] (1.0,1.5) circle (1pt);
\filldraw[black] (1.0,2.0) circle (1pt);
\filldraw[black] (1.0,2.5) circle (1pt);
\filldraw[black] (1.0,3.0) circle (1pt);
\filldraw[black] (1.0,3.5) circle (1pt);
\filldraw[black] (1.0,4.0) circle (1pt);
\filldraw[black] (1.0,4.5) circle (1pt);

\filldraw[black] (1.5,0.5) circle (1pt);
\filldraw[black] (1.5,1.0) circle (1pt);
\filldraw[black] (1.5,1.5) circle (1pt);
\filldraw[black] (1.5,2.0) circle (1pt);
\filldraw[black] (1.5,2.5) circle (1pt);
\filldraw[black] (1.5,3.0) circle (1pt);
\filldraw[black] (1.5,3.5) circle (1pt);
\filldraw[black] (1.5,4.0) circle (1pt);
\filldraw[black] (1.5,4.5) circle (1pt);

\filldraw[black] (1.5,0.5) circle (1pt);
\filldraw[black] (1.5,1.0) circle (1pt);
\filldraw[black] (1.5,1.5) circle (1pt);
\filldraw[black] (1.5,2.0) circle (1pt);
\filldraw[black] (1.5,2.5) circle (1pt);
\filldraw[black] (1.5,3.0) circle (1pt);
\filldraw[black] (1.5,3.5) circle (1pt);
\filldraw[black] (1.5,4.0) circle (1pt);
\filldraw[black] (1.5,4.5) circle (1pt);

\filldraw[black] (2.0,0.5) circle (1pt);
\filldraw[black] (2.0,1.0) circle (1pt);
\filldraw[black] (2.0,1.5) circle (1pt);
\filldraw[black] (2.0,2.0) circle (1pt);
\filldraw[black] (2.0,2.5) circle (1pt);
\filldraw[black] (2.0,3.0) circle (1pt);
\filldraw[black] (2.0,3.5) circle (1pt);
\filldraw[black] (2.0,4.0) circle (1pt);
\filldraw[black] (2.0,4.5) circle (1pt);

\filldraw[black] (2.5,0.5) circle (1pt);
\filldraw[black] (2.5,1.0) circle (1pt);
\filldraw[black] (2.5,1.5) circle (1pt);
\filldraw[black] (2.5,2.0) circle (1pt);
\filldraw[black] (2.5,2.5) circle (1pt);
\filldraw[black] (2.5,3.0) circle (1pt);
\filldraw[black] (2.5,3.5) circle (1pt);
\filldraw[black] (2.5,4.0) circle (1pt);
\filldraw[black] (2.5,4.5) circle (1pt);

\filldraw[black] (3.0,0.5) circle (1pt);
\filldraw[black] (3.0,1.0) circle (1pt);
\filldraw[black] (3.0,1.5) circle (1pt);
\filldraw[black] (3.0,2.0) circle (1pt);
\filldraw[black] (3.0,2.5) circle (1pt);
\filldraw[black] (3.0,3.0) circle (1pt);
\filldraw[black] (3.0,3.5) circle (1pt);
\filldraw[black] (3.0,4.0) circle (1pt);
\filldraw[black] (3.0,4.5) circle (1pt);

\filldraw[black] (3.5,0.5) circle (1pt);
\filldraw[black] (3.5,1.0) circle (1pt);
\filldraw[black] (3.5,1.5) circle (1pt);
\filldraw[black] (3.5,2.0) circle (1pt);
\filldraw[black] (3.5,2.5) circle (1pt);
\filldraw[black] (3.5,3.0) circle (1pt);
\filldraw[black] (3.5,3.5) circle (1pt);
\filldraw[black] (3.5,4.0) circle (1pt);
\filldraw[black] (3.5,4.5) circle (1pt);

\filldraw[black] (4.0,0.5) circle (1pt);
\filldraw[black] (4.0,1.0) circle (1pt);
\filldraw[black] (4.0,1.5) circle (1pt);
\filldraw[black] (4.0,2.0) circle (1pt);
\filldraw[black] (4.0,2.5) circle (1pt);
\filldraw[black] (4.0,3.0) circle (1pt);
\filldraw[black] (4.0,3.5) circle (1pt);
\filldraw[black] (4.0,4.0) circle (1pt);
\filldraw[black] (4.0,4.5) circle (1pt);

\filldraw[black] (4.5,0.5) circle (1pt);
\filldraw[black] (4.5,1.0) circle (1pt);
\filldraw[black] (4.5,1.5) circle (1pt);
\filldraw[black] (4.5,2.0) circle (1pt);
\filldraw[black] (4.5,2.5) circle (1pt);
\filldraw[black] (4.5,3.0) circle (1pt);
\filldraw[black] (4.5,3.5) circle (1pt);
\filldraw[black] (4.5,4.0) circle (1pt);
\filldraw[black] (4.5,4.5) circle (1pt);
\end{tikzpicture}
\caption{$\Omega_{\varepsilon}$,\, $N=9,\,m=2$. }
\label{fig1}
\end{figure}

In 1994,  R. Ba\~nuelos and T. Carroll \cite[(4.4),(4.6)]{BC} conjectured that the lower bound  in the left-hand side of \eqref{e2} remains sharp for $\Om$ in the class of simply connected sets in $\R^2$.
In Theorem \ref{the1} below we show that if $\Om$ is simply connected in $\R^2$ then the constant $1$  in the left hand side of \eqref{e2} can be improved by a positive number, thereby disproving the Ba\~nuelos and Carroll conjecture. We shall give an estimate of this number, without any claim  to be optimal.

Our first main result is the following.
\begin{theorem} \label{the1} Let $ \Om \subset \R^2$ be non-empty, open and simply connected. If $\lb(\Om) >0$, then
\begin{equation}\label{e5}
\|w_{\Om}\|_{\infty}\lambda(\Om)>1+\frac{3^5(67-44\sqrt 2)}{2^{34}\cdot5\cdot7j_0}{\mathfrak{c}_1,}
\end{equation}
where $j_0$ is the first positive zero of the Bessel function $J_0$,
and
\begin{equation*}
{\mathfrak{c}_1=\max_{c\ge\frac92\log 2}\frac{(1-2^{9/2}e^{-c})^7}{c^9}.}
\end{equation*}
\end{theorem}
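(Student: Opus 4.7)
My plan is to argue by contradiction. After a homothetic rescaling so that $\lambda(\Omega)=1$ (this keeps the dimensionless quantity $\|w_\Omega\|_\infty \lambda(\Omega)$ invariant) and normalizing the positive first Dirichlet eigenfunction $u$ by $\|u\|_{\infty}=1$, set $\Lambda=\|w_\Omega\|_{\infty}$ and assume that $\Lambda$ is arbitrarily close to $1$. The identity $-\Delta(w_\Omega-u)=1-u\ge 0$ with zero boundary trace yields the classical pointwise comparison $w_\Omega\ge u$ on $\Omega$ (which is the known inequality $\Lambda\ge 1$), and at any interior maximum point $y_0$ of $u$ the Green-kernel representation \eqref{e1b} gives
\[
\int_{\Omega} G_\Omega(y_0,y)\,(1-u(y))\,dy \;=\; w_\Omega(y_0) - u(y_0) \;\le\; \Lambda - 1.
\]
So in my setup the Green-weighted ``defect mass'' of $1-u$ seen from $y_0$ is forced to be very small.

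The core of the theorem is therefore a complementary universal lower bound: for every simply connected, open $\Omega\subset\R^2$ with $\lambda(\Omega)=1$ and any interior maximum point $y_0$ of its ground state, the quantity $\int_{\Omega} G_\Omega(y_0,\cdot)(1-u)$ is at least a strictly positive absolute constant. Such a bound must genuinely use simple connectivity, since the punctured examples $\Omega_\varepsilon$ of \eqref{e4} make this integral arbitrarily small in the general open setting. My approach to the lower bound would be conformal: apply the Riemann mapping theorem to pull the problem back to the unit disk, where the ground state is the Bessel profile $J_0(j_0 r)$ and the Green's function is explicit, and then use the Koebe $1/4$-theorem to locate an honest Euclidean ball $B_\rho(y_0)\subset\Omega$ of radius $\rho$ of order $1/j_0$. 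On such a ball, comparison with the disk eigenvalue $(j_0/\rho)^2$ together with the Hopf-type Taylor expansion $1-u(y)\gtrsim |y-y_0|^2$ near the maximum supplies a concrete quantitative lower bound for the integrand, and hence for the integral.

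The explicit form of the constant in \eqref{e5} is consistent with the following quantitative plan. Introduce a logarithmic scale parameter $c$ and work on concentric annuli of ratio $e^c$; the restriction $c\ge (9/2)\log 2$ is precisely what keeps the tail factor $2^{9/2}e^{-c}$ below $1$, which I would read as a sub-Gaussian or heat-kernel exit correction at logarithmic scale $c$. A seven-fold iterated localization --- for instance seven nested dyadic shells on which one successively propagates the smallness of $1-u$ outward --- produces the seventh power in $(1-2^{9/2}e^{-c})^7$, and the radial/volume integrations across these shells yield the polynomial factor $c^9$ in the denominator; optimizing in $c$ gives $\mathfrak{c}_1$. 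The arithmetic prefactor $\tfrac{3^5(67-44\sqrt 2)}{2^{34}\cdot 5\cdot 7\,j_0}$ then absorbs the accumulated losses from the successive disk comparisons, with the Bessel zero $j_0$ entering through the comparison $\lambda(B_\rho)=(j_0/\rho)^2$. The principal obstacle --- and where the bulk of the work must be done --- is the universal defect lower bound of the second paragraph: a single-scale Harnack or conformal-distortion estimate is too weak, and a multiscale scheme tailored to the simply connected planar geometry is required to produce a bound independent of $\Omega$.
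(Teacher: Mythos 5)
You start from the same identity as the paper (its equation \eqref{e37}/\eqref{e38}): at the maximum point $q$ of the ground state, $\|w_\Omega\|_\infty\lambda(\Omega)-1$ dominates $\lambda(\Omega)\int_\Omega G_\Omega(q,y)\big(1-u_\Omega(y)/\|u_\Omega\|_\infty\big)\,dy$, and the problem reduces to a universal positive lower bound on this Green-weighted defect. From there, however, your route diverges from the paper's and contains a genuine gap. The paper does \emph{not} pick up mass near the interior maximum; it picks it up near $\partial\Omega$, by first replacing $1-u/\|u\|_\infty$ with $(1-\lambda w_\Omega)_+$ via \eqref{e35} and then invoking the near-boundary decay $w_\Omega(y)\lesssim d_\Omega(y)^{1/2}\lambda^{-3/4}$ (Proposition \ref{the3}(i), which is where Koebe/conformal mapping enters, through Lemma \ref{lem1}). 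The Green-function factor is then handled by the elementary monotonicity bound $G_\Omega(q,y)\ge G_{B(q;d_\Omega(q))}(q,y)\ge\frac{1}{2\pi}d_\Omega(y)/d_\Omega(q)$, which is large precisely where $d_\Omega(y)$ is of order $R_c$, and the explicit constant comes from an exact integral over a ball $B(q^*;\tfrac12 R_c)$ tangent to $\partial\Omega$; the seventh power and the $c^9$ come from the single factor in \eqref{e40g} together with $R_c^3\propto(1-2^{9/2}e^{-c})^6/c^9$, and $j_0$ from $d_\Omega(q)\le r(\Omega)\le j_0\lambda^{-1/2}$. Your proposed ``seven nested dyadic shells'' mechanism is not what produces these exponents.

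The step in your plan that would actually fail is the claim $1-u(y)\gtrsim|y-y_0|^2$ near the interior maximum together with the Bessel normal form. First, the Riemann map does not conjugate the eigenvalue problem: the pullback of $u_\Omega$ to the disk solves $-\Delta v=\lambda|\Phi'|^2 v$, not the Bessel equation, so the ground state is \emph{not} $J_0(j_0 r)$ on the disk; only the Green's function is conformally invariant. Second, the Hessian of $u_\Omega$ at $y_0$ satisfies only $\mathrm{tr}(-\nabla^2 u_\Omega(y_0))=\lambda\|u_\Omega\|_\infty$; one eigenvalue can be arbitrarily close to $0$ (think of a very long thin domain), so there is no universal two-sided quadratic lower bound for $1-u$ near $y_0$. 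One could try to salvage this by working only along the nondegenerate direction, but that still requires quantitative control of third derivatives of $u_\Omega$ at scale $\lambda^{-1/2}$, which in turn requires knowing $d_\Omega(y_0)\gtrsim\lambda^{-1/2}$ --- i.e.\ you would have to prove Proposition \ref{the3}(iii), and that proposition is itself derived from the near-boundary decay of $w_\Omega$ (Lemma \ref{lem1}, equations \eqref{e13a}--\eqref{e13c}). In other words, the boundary-decay estimate you are trying to avoid is the unavoidable input, and working at the interior maximum replaces a clean, explicitly integrable estimate with a degenerate Taylor argument that does not produce a constant uniform over the class of simply connected domains.
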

{The right-hand side of \eqref{e5} is strictly larger than $1+10^{-16}$.}

In \cite[Theorem 1]{BC} it was shown that if $\Om$ is simply connected in $\R^2$
\begin{equation}\label{e5x}
\|w_{\Om}\|_{\infty}\le \frac{7\zeta(3)}{16c_0^2}r(\Om)^2,
\end{equation}
where $\zeta(.)$ is the Riemann zeta function, $c_0$ is the schlicht Bloch-Landau constant, and
\begin{equation*}
r(\Om)=\sup_{x\in\Om}d_{\Om}(x)
\end{equation*}
is the inradius of $\Om$ with
the distance to the boundary $\partial \Omega$ given by
\begin{equation*}
d_{\Om}(x)=\inf\{|x-y|:y\notin\Om\}.
\end{equation*}

Combining Theorem \ref{the1} with \eqref{e5x} yields
\begin{equation*}
\lambda(\Om)\ge \frac{16c_0^2}{7\zeta(3)}\Big(1+\frac{3^5(67-44\sqrt 2)}{2^{34}\cdot5\cdot7j_0}{\mathfrak{c}_1}\Big)r(\Om)^{-2}.
\end{equation*}
This improves the constant in \cite[(0.7)]{BC}, see also \cite[p.2473]{B}, by a factor given by the right-hand side of \eqref{e5}. We believe this is, as far as we know, the best result at present.
However, its importance is not so much its small improvement rather the non-trivial contribution of the right-hand side of \eqref{e5}.

We recall \cite[Theorem 1]{BC2} where it was shown that
\begin{equation*}
\sup\big\{\|w_{\Om}\|_{\infty}r(\Om)^{-2}: \Om\,\, \textup{simply connected, non-empty, open in}\, \R^2,\,r(\Om)<\infty\}
\end{equation*}
has a maximiser. We point out that this maximiser is not necessarily a minimiser of
\begin{equation*}
\inf\big\{\lambda(\Om)r(\Om)^2: \Om\,\, \textup{simply connected, non-empty, open in}\, \R^2,\,r(\Om)<\infty\big\}.
\end{equation*}

We shall see below that it suffices to prove Theorem \ref{the1} for non-empty, open, simply connected sets in $\R^2$ with finite measure.
 The proof of Theorem \ref{the1} relies on some properties of the torsion function $w_{\Om}$ and of the first  Dirichlet eigenfunction $u_{\Om}$. These will be presented in Proposition \ref{the3} below. The assertions there will be given in more generality as we believe these to be of independent interest.
In Proposition \ref{the3} (i) we merely assume that $\Om$ is simply connected and $\lambda(\Om)>0$, while in (ii) we assume that the torsion function has a maximum {point}. In fact (ii) is not needed in the proof of Theorem \ref{the1}.
In Proposition \ref{the3}(iii) we obtain a lower bound for the distance to the boundary of the maximum of the first Dirichlet eigenfunction (chosen positive) in a simply connected open set in $\R^2$. The existence of that first Dirichlet eigenfunction is guaranteed if the Dirichlet Laplacian acting in $L^2(\Om)$ has compact resolvent. The latter is, by \cite[Corollary 8]{MvdB4}, equivalent to
\begin{equation}\label{e13b}
|\{x\in\Om:d_{\Om}(x)>\varepsilon\}|<\infty, \forall \varepsilon>0.
\end{equation}

 \begin{proposition}\label{the3} Let $\Om\subset \R^2$ be non-empty, open and simply connected.
\begin{itemize}
\item[\textup{(i)}] If $\lambda(\Om)>0$, then for all $c>0$,
\begin{equation}\label{e12}
w_{\Om}(x)\le \frac{32}{3}d_{\Om}(x)^{1/2}\lambda(\Om)^{-3/4}c^{3/2}+\lambda(\Om)^{-1}2^{9/2}e^{-c},\,x\in\Om.
\end{equation}
\item[\textup{(ii)}]
If $\lambda(\Om)>0$ and if $x_{w_{\Om}}$ is a point at which $w_{\Om}$ attains its maximum, then for any $c\ge \frac92\log 2$,
\begin{equation}\label{e12a}
d_{\Om}(x_{w_{\Om}})\ge \frac{3^2}{2^{10}}\frac{(1-2^{9/2}e^{-c})^2}{c^3}  \lambda(\Om)^{-1/2}.
\end{equation}
Moreover,
\begin{equation}\label{e12b}
d_{\Om}(x_{w_{\Om}})\ge \frac{3^2}{2^{10}}\mathfrak{c}_2 \lambda(\Om)^{-1/2},
\end{equation}
where
\begin{equation}\label{e12c}
\mathfrak{c}_2=\max_{c\ge\frac92\log 2}\frac{(1-2^{9/2}e^{-c})^2}{c^3}
\end{equation}
\item[\textup{(iii)}]
If \eqref{e13b} holds, then there exists a point $x_{u_{\Om}}\in \Om$ such that $u_{\Om}(x_{u_{\Om}})=\|u_{\Om}\|_{\infty}.$
Moreover,
\begin{equation}\label{e13}
d_{\Om}(x_{u_{\Om}})\ge \frac{3^2}{2^{10}}\mathfrak{c}_2 \lambda(\Om)^{-1/2}.
\end{equation}
\end{itemize}
\end{proposition}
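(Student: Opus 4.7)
The starting point is the heat-semigroup representation
\begin{equation*}
w_{\Om}(x)=\int_{0}^{\infty}(e^{t\Delta_{\Om}}\mathbf{1})(x)\,dt,
\end{equation*}
which I split at the spectral scale $T=c/\lb(\Om)$. For the tail $t\ge T$ I aim at a pointwise bound of the form $(e^{t\Delta_\Om}\mathbf{1})(x)\le 2^{9/2}e^{-\lb(\Om)t}$: using the semigroup identity $e^{t\Delta_\Om}\mathbf{1}=e^{(t-s)\Delta_\Om}(e^{s\Delta_\Om}\mathbf{1})$ together with $\|e^{s\Delta_\Om}\|_{L^2\to L^2}\le e^{-\lb(\Om)s}$ and a dimension-two $L^2\to L^\infty$ heat-kernel bound reduces the matter to controlling $\|e^{s\Delta_\Om}\mathbf{1}\|_2$ at one well-chosen $s$, for which the small-time estimate below suffices. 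Integrating on $[T,\infty)$ produces the second summand $\lb(\Om)^{-1}2^{9/2}e^{-c}$. For the head $t<T$ simple connectedness enters essentially via a bound of the form $(e^{t\Delta_\Om}\mathbf{1})(x)=\Pa_x(\tau_\Om>t)\le 16\,d_\Om(x)^{1/2}\,\lb(\Om)^{3/4}\,t^{1/2}$; the $d_\Om(x)^{1/2}$ factor is the signature of a plane simply connected domain and can be obtained either by Beurling-projection/symmetrisation onto a half-plane, or via Koebe distortion for the Riemann map from the unit disc onto $\Om$ combined with a heat-kernel comparison. Integrating on $[0,T]$ and substituting $T=c/\lb(\Om)$ yields the first summand $\frac{32}{3}d_\Om(x)^{1/2}\lb(\Om)^{-3/4}c^{3/2}$.

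\textbf{Plan for (ii).} Specialise (i) to $x=x_{w_\Om}$ and invoke the universal lower bound $\|w_\Om\|_\infty\lb(\Om)\ge 1$ from \eqref{e2}; the inequality rearranges to
\begin{equation*}
1-2^{9/2}e^{-c}\le \frac{32}{3}\,d_\Om(x_{w_\Om})^{1/2}\,\lb(\Om)^{1/4}\,c^{3/2}.
\end{equation*}
The restriction $c\ge\frac{9}{2}\log 2$ is precisely what makes the left-hand side nonnegative, so squaring and isolating $d_\Om(x_{w_\Om})$ produces \eqref{e12a}, and taking the maximum over such $c$ gives \eqref{e12b}--\eqref{e12c}.

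\textbf{Plan for (iii).} I first show $u_\Om$ attains its supremum. By \cite[Corollary 8]{MvdB4}, condition \eqref{e13b} is equivalent to compact resolvent, so a positive first eigenfunction $u_\Om\in L^2(\Om)$ exists; elliptic regularity makes it continuous on $\Om$, and every point of $\partial\Om$ is regular since $\Om$ is simply connected in $\R^2$, so $u_\Om(x)\to 0$ as $d_\Om(x)\to 0$. A Harnack/mean-value estimate combined with $u_\Om\in L^2(\Om)$ and \eqref{e13b} forces $u_\Om(x)\to 0$ as $|x|\to\infty$; hence the sup is attained at some $x_{u_\Om}\in\Om$. For the distance bound I compare $u_\Om$ and $w_\Om$: from $\mathbf{1}\ge u_\Om/\|u_\Om\|_\infty$, positivity of the heat kernel, and $e^{t\Delta_\Om}u_\Om=e^{-\lb(\Om)t}u_\Om$,
\begin{equation*}
w_\Om(x)\ge\int_0^\infty e^{-\lb(\Om)t}\frac{u_\Om(x)}{\|u_\Om\|_\infty}\,dt=\frac{u_\Om(x)}{\lb(\Om)\|u_\Om\|_\infty},
\end{equation*}
so $\lb(\Om)w_\Om(x_{u_\Om})\ge 1$; applying (i) at $x_{u_\Om}$ and repeating the argument of (ii) delivers \eqref{e13}.

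\textbf{Main obstacle.} All the real work is in the small-time estimate in (i): producing the factor $d_\Om(x)^{1/2}$ with exactly the right pairing $\lb(\Om)^{3/4}t^{1/2}$, and tracking the constants $\frac{32}{3}$ and $2^{9/2}$, is where the hypothesis ``$\Om$ simply connected in $\R^2$'' is genuinely used and is the only non-mechanical step; the large-time spectral-gap tail and the two reductions (ii)--(iii) then fall out by the semigroup comparisons indicated above.
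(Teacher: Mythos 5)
Your plans for (ii) and (iii) essentially match the paper's: you use the same comparison $w_\Om(x)\ge u_\Om(x)/(\lambda(\Om)\|u_\Om\|_\infty)$, specialize (i) at the maximum point, and for existence in (iii) you invoke \eqref{e13b} through the same compactness mechanism the paper uses. The genuine issue is in your plan for (i), and it is a gap, not just a stylistic difference.

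You split the time integral $w_\Om(x)=\int_0^\infty(e^{t\Delta_\Om}\mathbf 1)(x)\,dt$ at $T=c/\lambda(\Om)$, and for the head you assert the pointwise-in-$t$ bound
\begin{equation*}
(e^{t\Delta_\Om}\mathbf 1)(x)=\Pa_x(\tau_\Om>t)\le 16\,d_\Om(x)^{1/2}\lambda(\Om)^{3/4}t^{1/2}.
\end{equation*}
This inequality is false. Take $\Om=B_1$ and $x=0$: then $d_\Om(0)=1$, $\lambda(\Om)=j_0^2$, $T\approx c/5.78$, and for $t=10^{-4}<T$ the right-hand side is $16\,j_0^{3/2}\cdot 10^{-2}\approx 0.6$ while $\Pa_0(\tau_{B_1}>10^{-4})$ is essentially $1$. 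More structurally, $\Pa_x(\tau_\Om>t)\to 1$ as $t\to 0^+$ at every interior $x$, whereas your right-hand side vanishes as $t^{1/2}$; and a small-time local survival probability cannot be controlled by the global spectral quantity $\lambda(\Om)$. No version of Beurling projection or Koebe distortion produces a factor $\lambda(\Om)^{3/4}$ in a small-time heat-kernel bound. The tail half of your split has a related problem: the clean rate $(e^{t\Delta_\Om}\mathbf 1)(x)\le 2^{9/2}e^{-\lambda(\Om)t}$ with decay exponent $\lambda(\Om)$ (not $\lambda(\Om)/4$) is not what the semigroup/Gaussian interpolation you sketch delivers, and justifying a $t$-independent prefactor of exactly $2^{9/2}$ is itself nontrivial. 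In short, you have reverse-engineered the two summands of \eqref{e12} into a time decomposition, but the intermediate pointwise bounds you posit do not hold.

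The paper avoids both problems by splitting \emph{in space} rather than in time. It writes $w_\Om(x)=\int_\Om G_\Om(x,y)\,dy$ and separates $\{|x-y|<\Lambda\}$ from $\{|x-y|\ge\Lambda\}$ with $\Lambda=4c\lambda(\Om)^{-1/2}$. On the near-field it uses the conformal-map/Koebe bound $G_\Om(x,y)\le\frac1\pi\big(d_\Om(x)/|x-y|\big)^{1/2}$ (Lemma~\ref{lem1}), which depends only on $d_\Om(x)$ and $|x-y|$ and integrates to $\frac43 d_\Om(x)^{1/2}\Lambda^{3/2}$ — no $\lambda$ appears until the choice of $\Lambda$. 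On the far-field it uses the pointwise heat-kernel bound $p_\Om(x,y;t)\le\frac{2^{1/2}}{4\pi t}e^{-|x-y|^2/(8t)-t\lambda(\Om)/4}$ (Lemma~\ref{lem2}), integrates over $t$ and then over $\{|x-y|\ge\Lambda\}$ to get $\frac{2^{9/2}}{\lambda(\Om)}e^{-\lambda(\Om)^{1/2}\Lambda/4}$. You would do well to adopt this spatial decomposition; your time-split, as written, cannot be repaired without abandoning the form of the head bound.
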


Estimates like \eqref{e13} have been obtained in \cite[Corollary 1.2]{RS}. The constant there has not been quantified. However, \cite[Proposition 3.1]{RS} suggests a {method of estimating it}. In the special case of open, bounded and convex sets in $\R^m$ we refer to \cite[Theorem 2.8]{BMS} and  \cite[Theorem 3.2]{BL}.
\smallskip

In order to introduce the second result of the paper, we recall a classical inequality for open sets $\Om$ with finite measure involving the torsional rigidity $T(\Om):=\|w_{\Om}\|_1$ of $\Om$, which goes back to  P\'{o}lya and Szeg\"{o} \cite{PSZ}.
It asserts that the function $F$ defined by
\begin{equation}\label{e6}
F(\Omega)=\frac{T(\Omega)\lambda(\Omega)}{\vert\Omega\vert}
\end{equation}
satisfies
\begin{equation}\label{e7}
F(\Omega)< 1.
\end{equation}
The constant $1$ in the right-hand side of \eqref{e7} is sharp, as shown in \cite[Theorem 1.2]{vdB2}. That is, given $\varepsilon\in (0,1)$ there exists a non-empty open set $\Om'_{\varepsilon}\subset \R^m$ such that
\begin{equation}\label{e8}
F(\Omega'_{\varepsilon})> 1-\varepsilon.
\end{equation}
The open set $\Om'_{\varepsilon}$ is yet again an open cube with side-length $1$ punctured with $N^m$ balls positioned in a periodic order and with suitably chosen small radii, as in Figure 1.
 Moreover \cite[Theorem 1.1]{vdB2} quantifies the inequality in \eqref{e7}, and reads
 \begin{equation}\label{e9}
F(\Omega)\le  1-\frac{2m\omega_m^{2/m}}{m+2}\frac{T(\Om)}{|\Om|^{(m+2)/m}},
\end{equation}
  where $\omega_m$ is the measure of the ball with radius $1$ in $\R^m$.
The question arises once more whether the constant $1$ in \eqref{e7} is sharp for simply connected sets $\Om\subset \R^2$ with finite measure. Our second main result is the following.
\begin{theorem} \label{the2}   If $\Om\subset \R^2$ is non-empty, open and simply connected with finite measure, then
\begin{equation}\label{e10}
 F(\Om)<1-  \frac{ 3^4}{12801\cdot  2^{31} \Big( 1+ \frac{\pi +\pi^2}{16}\Big)^4 }.
\end{equation}
\end{theorem}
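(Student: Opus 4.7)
The plan is to combine the quantitative P\'olya--Szeg\H{o} inequality \eqref{e9} with Proposition \ref{the3} and Theorem \ref{the1}, splitting the argument into two regimes according to the size of $\lambda(\Omega)|\Omega|$. Since $\Omega$ has finite measure, $\lambda(\Omega)>0$ and $w_\Omega$ is the classical torsion function; specialising \eqref{e9} to $m=2$ (with $\omega_2=\pi$) gives
\[
F(\Omega)\le 1-\frac{\pi T(\Omega)}{|\Omega|^2},
\]
so a uniform positive lower bound for $T(\Omega)/|\Omega|^2$ on the simply connected class will yield \eqref{e10}.

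For a lower bound on $T(\Omega)$, I would apply Proposition \ref{the3}(ii): at a maximum point $x_{w_\Omega}$ of $w_\Omega$ one has $d_\Omega(x_{w_\Omega})\ge (3^2/2^{10})\mathfrak{c}_2\,\lambda(\Omega)^{-1/2}$, so by domain monotonicity and the explicit torsion $w(x)=(R^2-|x-x_{w_\Omega}|^2)/4$ on the inscribed disk of radius $R=d_\Omega(x_{w_\Omega})$,
\[
T(\Omega)\ge \frac{\pi}{8}\,d_\Omega(x_{w_\Omega})^4 \ge \frac{\pi}{8}\Big(\frac{3^2}{2^{10}}\mathfrak{c}_2\Big)^{\!4}\lambda(\Omega)^{-2}.
\]
In the ``fat'' regime $\lambda(\Omega)|\Omega|\le K$ (for a threshold $K>0$ to be chosen), this together with the display above yields at once $1-F(\Omega)\ge c_1/K^2$ with an explicit $c_1$ in terms of $\mathfrak{c}_2$. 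In the ``thin'' regime $\lambda(\Omega)|\Omega|>K$, which accommodates elongated simply connected sets, the inscribed-disk estimate alone is insufficient; there I would use the pointwise decay bound \eqref{e12} of Proposition \ref{the3}(i), together with the Ba\~{n}uelos--Carroll inequality \eqref{e5x} (which relies on simple connectivity), to show that on a subset of $\Omega$ of measure comparable to $|\Omega|$ one has $\lambda(\Omega)w_\Omega(x)\le 1-\eta_2'$ for an explicit $\eta_2'>0$, and then integrate.

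The principal difficulty is to render the thin-regime estimate fully quantitative. The appearance of the factor $(1+(\pi+\pi^2)/16)^4$ in \eqref{e10} is consistent with a fourth-power step coming from the inscribed-disk bound $T(\Omega)\ge(\pi/8)d_\Omega(x_{w_\Omega})^4$ together with a Poincar\'e/Bessel comparison on a disk of radius of order $\lambda(\Omega)^{-1/2}$ when turning the pointwise estimate \eqref{e12} into an integral inequality. Optimising the threshold $K$ between the two regimes, and tracking constants, should produce the numerical factor $3^4/(12801\cdot 2^{31})$ in \eqref{e10}. Simple connectivity enters in an essential way via \eqref{e5x}: the periodic-perforation examples of \cite{vdB2} that saturate \eqref{e7} are not simply connected, and \eqref{e5x} prevents an analogous near-saturation inside the simply connected class.
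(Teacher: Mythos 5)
Your proposal takes a genuinely different route from the paper, and unfortunately the route has a gap that is not repairable in the form you describe. The paper does not estimate $T(\Omega)/|\Omega|^2$ at all (it explicitly observes, with the example of $n$ balls joined by thin tubes, that \eqref{e9} cannot yield \eqref{e10} for simply connected sets, since $T(\Omega)/|\Omega|^2$ can be made arbitrarily small); instead it works through Proposition~\ref{prop1}, i.e.\ a uniform bound on the $L^1$--$L^2$ participation ratio $\Phi_{1,2}(\Omega)$. The key algebraic step is to use $w_\Omega$ itself as a test function in the Rayleigh quotient together with $\int_\Omega |\nabla w_\Omega|^2 = \int_\Omega w_\Omega$, giving
\[
T(\Omega)\lambda(\Omega) \le \frac{\big(\int_\Omega w_\Omega\big)^2}{\int_\Omega w_\Omega^2},
\]
which reduces everything to the \emph{variance} $\fint_\Omega (w_\Omega - \fint_\Omega w_\Omega)^2 \ge \eta/(1-\eta)$. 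This is then obtained via a Vitali covering of $\Omega$ by inscribed disks, a case split on the disk radii, and a quantitative boundary-decay lemma (Lemma~\ref{vdb202}) coming from the conformal Green's function bound \eqref{e17}; the factor $(1+(\pi+\pi^2)/16)^4$ arises from the explicit cutoff $\varphi$ in that lemma, not from an inscribed-disk or Bessel comparison.

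The concrete gap is in your ``thin'' regime. You want to show $\lambda(\Omega)\int_\Omega w_\Omega < (1-\eta)|\Omega|$ by producing a set $A$ of measure comparable to $|\Omega|$ on which $\lambda(\Omega)w_\Omega \le 1-\eta_2'$ ``and then integrate''. But $\int_\Omega(1-\lambda(\Omega)w_\Omega)$ has a sign problem: by \eqref{e2} one has $\lambda(\Omega)w_\Omega > 1$ near the maximum, so the complementary set $\Omega\setminus A$ contributes \emph{negatively}. To make the integration close one would need $\int_{\Omega\setminus A}(\lambda(\Omega)w_\Omega-1)$ to be dominated by $\eta_2'|A|$ minus the target $\eta|\Omega|$, and with the only available uniform bound $\|\lambda(\Omega)w_\Omega\|_\infty \le c_2 \approx 4 + 6\log 2$ this forces $|A|/|\Omega|$ to be extremely close to $1$. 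For the paper's own pearl-necklace example $\Omega_n$ (which falls in your thin regime since $\lambda(\Omega_n)|\Omega_n| \sim n$), a direct computation on a disk of radius $R$ shows $\{\lambda w \le 1/2\}$ is only about $35\%$ of each ball, so $|A|/|\Omega|\approx 0.35$, far short of what the balance requires. Thus the thin-regime estimate does not close as sketched; the paper's passage to the $L^1$--$L^2$ ratio is exactly what avoids this sign/balance obstruction, since a variance is nonnegative by construction. I'd encourage you to look at the proof of Proposition~\ref{prop1}, in particular the role of the Wiener-type Lemma~\ref{vdb202}, which replaces the role you assigned to \eqref{e5x}; simple connectivity enters there through the conformal bound \eqref{e18}/\eqref{e17} on the Green's function.
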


Note that for simply connected open sets \eqref{e9} cannot give a bound of the form \eqref{e10}. Indeed, it is possible to construct a simply connected open set  $\Om\subset \R^2$ with $|\Om|=1$
and $T(\Om)$ arbitrarily small: let $\Om_n$ be the union of $n$ disjoint balls with total measure $1-\frac{1}{100^n}$ connected by $n-1$ disjoint thin tubes having total measure $\frac{1}{100^n}$.
It is easily seen that  $T(\Om_n)/|\Om_n|^2=O(\frac{1}{n})$.

Though the sets $\Om_{\varepsilon}$ and $\Om'_{\varepsilon}$  look very similar, and the statements of Theorems \ref{the1} and \ref{the2} are very similar, we were unable to show that either
Theorem \ref{the1} implies Theorem \ref{the2} with a possibly different correction to $1$ in \eqref{e10} or Theorem \ref{the2} implies Theorem \ref{the1} with a possibly different correction to $1$ in \eqref{e5}.

For the $p$-{Laplacian},  the value $1$ in inequality \eqref{e7} is sharp for $p \in (1,m]$. This was proved in \cite{LBGBFP}.  For $p>m$, the constant $1$ is no longer sharp, see \cite{LBDB}.
 The latter situation corresponds to the case where points have strictly positive $p$-capacity. A somewhat  similar situation occurs in Theorem \ref{the2}. Here, the simply connectedness hypothesis
  implies a uniform thickness with respect to the Wiener criterion, leading to a constant strictly less than $1$. However, in our framework we are able to give an estimate for  that constant, which is not the case for arbitrary $p$.

The key result on which the proof of Theorem \ref{the2} relies is established in Proposition \ref{prop1} below. It is convenient to introduce the mean to max and the {$L^1-L^2$ participation ratios of the torsion function respectively by}
\begin{equation}\label{e10a}
\Phi_{1,\infty}(\Om)=\frac{1}{|\Om|}\frac{\|w_{\Om}\|_1}{\|w_{\Om}\|_{\infty}}, \hskip 1cm \Phi_{1,2}(\Om)=\frac{1}{|\Om|^\frac 12} \frac{\|w_{\Om}\|_1}{\|w_{\Om}\|_{2}}.
\end{equation}

\begin{proposition}\label{prop1}   If $\Omega\subset \R^2$ is a non-empty, open and simply connected set with finite measure, then
\begin{equation*}
  \Phi_{1,2}(\Om)\le \Bigg(1- \frac{ 3^4 }{12801\cdot  2^{31}  \Big( 1+ \frac{\pi +\pi^2}{16}\Big)^4 }\Bigg)^\frac 12.
\end{equation*}
\end{proposition}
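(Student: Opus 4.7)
The plan is to lower-bound the Cauchy--Schwarz deficit via the polarization identity
\begin{equation*}
|\Omega|\|w_\Omega\|_2^2 - \|w_\Omega\|_1^2 \;=\; \tfrac12 \int_\Omega \int_\Omega (w_\Omega(x)-w_\Omega(y))^2 \,dx\,dy,
\end{equation*}
since $\Phi_{1,2}(\Omega)^2 = 1 - (|\Omega|\|w_\Omega\|_2^2 - \|w_\Omega\|_1^2)/(|\Omega|\|w_\Omega\|_2^2)$. The right-hand side is scale invariant, so I would normalize $\lambda(\Omega)=1$. The strategy is to exhibit two disjoint sets $A, E \subset \Omega$ on which $w_\Omega$ takes quantitatively separated values and then apply
\begin{equation*}
|\Omega|\|w_\Omega\|_2^2 - \|w_\Omega\|_1^2 \;\ge\; |A|\,|E|\,\bigl(\inf_A w_\Omega - \sup_E w_\Omega\bigr)^2.
\end{equation*}

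For the \emph{core} I use Proposition \ref{the3}(ii): there is a maximum point $x_{w_\Omega} \in \Omega$ with $d_\Omega(x_{w_\Omega}) \ge \rho_0 := (3^2/2^{10})\mathfrak{c}_2$. Domain monotonicity and the explicit torsion of a disc yield $w_\Omega \ge 3\rho_0^2/16$ on $A := B(x_{w_\Omega},\rho_0/2)$, with $|A| = \pi \rho_0^2/4$. For the \emph{boundary layer} $E_\rho := \{x \in \Omega : d_\Omega(x) < \rho\}$, Proposition \ref{the3}(i) gives $w_\Omega(x) \le \tfrac{32}{3}\rho^{1/2}c^{3/2} + 2^{9/2}e^{-c}$ for every $c>0$. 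Choosing $c$ so that $2^{9/2}e^{-c} \le 3\rho_0^2/64$ and then $\rho$ so that $\tfrac{32}{3}\rho^{1/2}c^{3/2} \le 3\rho_0^2/64$ forces $w_\Omega \le 3\rho_0^2/32$ on $E_\rho$, giving an explicit contrast of at least $3\rho_0^2/32$ between the two sets; the parameters $\rho_0,c,\rho$ are thereby completely explicit.

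Combining these with the crude upper bound $\|w_\Omega\|_2^2 \le \|w_\Omega\|_\infty \|w_\Omega\|_1 \le (4+6\log 2)\|w_\Omega\|_1$ (from \eqref{e2} with $m=2$) together with $\|w_\Omega\|_1 \le |\Omega|$ (P\'olya--Szeg\"o, \eqref{e7}) gives
\begin{equation*}
1 - \Phi_{1,2}(\Omega)^2 \;\ge\; \frac{|A|\,|E_\rho|\,(3\rho_0^2/32)^2}{(4+6\log 2)\,|\Omega|^2}.
\end{equation*}
The main obstacle is that this expression contains $|E_\rho|/|\Omega|$, for which a lower bound independent of the shape of $\Omega$ must be produced. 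This is precisely where simple connectedness enters, and where the analogous inequality fails for the punctured cubes of Figure \ref{fig1}. My plan is to invoke Hayman's inradius estimate $r(\Omega) \le c_0^{-1}\lambda^{-1/2} = c_0^{-1}$, together with a coarea-type inequality $|\Omega| \lesssim r(\Omega)P(\Omega)$ to deduce $P(\Omega) \gtrsim c_0|\Omega|$, and then a Minkowski-content estimate $|E_\rho| \ge (1-O(\rho))\rho P(\Omega)$ to obtain $|E_\rho| \ge c\rho\,|\Omega|$ with $c$ explicit.

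Substituting this density estimate into the previous display and tracking the explicit constants from Proposition \ref{the3} then produces a lower bound of the form $1 - \Phi_{1,2}(\Omega)^2 \ge \delta_0$ with $\delta_0$ explicit. The particular geometric factor $(1+(\pi+\pi^2)/16)^4$ appearing in the statement should arise when the crude estimate $\|w_\Omega\|_2^2 \le \|w_\Omega\|_\infty \|w_\Omega\|_1$ is sharpened by a disc-comparison argument, in which $w_\Omega$ on the inscribed disc around $x_{w_\Omega}$ is compared with the explicit torsion function of a reference disc; there the area contribution $\pi R^2$ and the perimeter contribution $\pi R$ combine, evaluated at $R=1/4$, into the dimensionless ratio $1 + (\pi+\pi^2)/16$, and the fourth power comes from applying this refinement twice to $\|w_\Omega\|_2^2$ and squaring inside the Cauchy--Schwarz deficit.
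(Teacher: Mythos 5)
The proposal takes a genuinely different route from the paper, but it contains a concrete flaw that cannot be repaired without abandoning the basic strategy. Your chosen quantity to lower-bound is
$$|\Omega|\|w_\Omega\|_2^2 - \|w_\Omega\|_1^2 \;\ge\; |A|\,|E|\,(\inf_A w_\Omega - \sup_E w_\Omega)^2,$$
with $A=B(x_{w_\Omega},\rho_0/2)$, and after normalising $\lambda(\Omega)=1$ you use $\|w_\Omega\|_2^2\le(4+6\log 2)|\Omega|$ to get
$$1-\Phi_{1,2}(\Omega)^2\;\ge\;\frac{|A|\,|E_\rho|\,(\text{contrast})^2}{(4+6\log 2)|\Omega|^2}.$$
But $|A|$, the contrast, and the radius $\rho$ defining $E_\rho$ are all universal constants once $\lambda=1$ is fixed, while $|E_\rho|\le|\Omega|$ trivially. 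So the right-hand side is at most $C/|\Omega|$, which tends to $0$: under the normalisation $\lambda=1$ the measure $|\Omega|$ is \emph{not} bounded (e.g.\ a long rectangle of fixed width has $\lambda$ fixed but $|\Omega|$ arbitrary). A single ball around the maximum point contributes only a bounded amount of ``spread'' to the variance, so this construction cannot yield a bound independent of $|\Omega|$. One would need the ``core'' to occupy a fixed fraction of $|\Omega|$, not just a ball of fixed size. The paper achieves exactly this by covering $\Omega$ with the inscribed balls $B_{d_\Omega(x)}(x)$ and invoking Vitali to extract a disjoint subfamily whose total measure is $\ge|\Omega|/25$; it is a density of such balls, not a single one, that enters.

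Secondary issues: the proposed geometric steps $|\Omega|\lesssim r(\Omega)P(\Omega)$ and $|E_\rho|\ge(1-O(\rho))\rho P(\Omega)$ are stated without proof, are not obviously true for simply connected planar sets with possibly infinite perimeter, and in any case would not repair the flaw above. Also, your proposed explanation for the factor $\big(1+\tfrac{\pi+\pi^2}{16}\big)^4$ is not how it arises: in the paper it comes from the auxiliary cut-off in Lemma \ref{vdb202}, whose Laplacian is bounded by $\tfrac{\pi+\pi^2}{2R_0^2}$ with $R_0=\sqrt 8$, and the fourth power comes from solving for $r_0^{1/2}$ in the boundary decay and then squaring $r_0$ to obtain a measure bound in Case 2a of the paper's proof. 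The paper never normalises $\lambda(\Omega)$ nor uses $\|w_\Omega\|_\infty\lambda\le c_m$; it normalises $\fint_\Omega w_\Omega=1$ and argues directly with the variance $\fint_\Omega(w_\Omega-1)^2$, splitting by cases (large inscribed balls; small inscribed balls with $w_\Omega$ small near the touching boundary point; small inscribed balls with a nearby $w_\Omega$ peak detected via subharmonicity of $w_\Omega+|x-a|^2/4$).
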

Roughly speaking,  the proof follows the same strategy as in \cite{LBDB} for the $p$-Laplacian when $p>m$. However, there are some key points where the proofs are significantly different. These concern the growth of the torsion function near the boundary which, in the case $p>m$, is  reduced to the analysis of a singleton. In our case,  the proof requires quantified decay estimates relying on the Wiener criterion along with the repeated use of Vitali's covering theorem.

\smallskip

We also obtain an estimate of the mean to max ratio.
\begin{corollary}\label{cor1} If $\Om\subset \R^2$ is non-empty, open and simply connected  with finite measure, then
\begin{equation}\label{e10b.2}
 \Phi_{1,\infty}(\Om)<\Bigg(1-  \frac{ 3^4 }{12801   \cdot 2^{31}  \Big( 1+ \frac{\pi +\pi^2}{16}\Big)^4 }\Bigg)
 \Big(1+\frac{3^5(67-44\sqrt 2)}{2^{34}\cdot5\cdot7j_0}{\mathfrak{c}_1}\Big)^{-1}.
\end{equation}
\end{corollary}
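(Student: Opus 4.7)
The plan is to observe that the mean-to-max ratio $\Phi_{1,\infty}(\Om)$ factors cleanly as a quotient of the two quantities already controlled by Theorems \ref{the1} and \ref{the2}, so that the corollary reduces to a short algebraic combination. Specifically, using the definitions in \eqref{e10a} and \eqref{e6} together with $T(\Om)=\|w_{\Om}\|_1$, I would write
\begin{equation*}
\Phi_{1,\infty}(\Om)=\frac{\|w_{\Om}\|_1}{|\Om|\,\|w_{\Om}\|_{\infty}}
=\frac{T(\Om)\lambda(\Om)/|\Om|}{\lambda(\Om)\|w_{\Om}\|_{\infty}}
=\frac{F(\Om)}{\lambda(\Om)\|w_{\Om}\|_{\infty}}.
\end{equation*}

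Next I would verify that both theorems are available: since $\Om$ is a non-empty open set with $|\Om|<\infty$, the Faber--Krahn inequality gives $\lambda(\Om)>0$, hence $\|w_{\Om}\|_{\infty}<\infty$ and Theorem \ref{the1} applies to bound the denominator from below by $1+\frac{3^5(67-44\sqrt 2)}{2^{34}\cdot 5\cdot 7\, j_0}\mathfrak{c}_1$; simple connectedness and finite measure place $\Om$ in the hypotheses of Theorem \ref{the2}, giving the upper bound $F(\Om)<1-\frac{3^4}{12801\cdot 2^{31}(1+\frac{\pi+\pi^2}{16})^4}$ on the numerator. Dividing the second inequality by the first yields exactly \eqref{e10b.2}.

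There is no real obstacle here: the work has been done in Theorems \ref{the1} and \ref{the2}, and the role of the corollary is simply to record the consequence of combining them through the identity $\Phi_{1,\infty}(\Om)=F(\Om)/(\lambda(\Om)\|w_{\Om}\|_{\infty})$. The only point worth double-checking is that the strict inequality from Theorem \ref{the2} is preserved under the division, which it is because the denominator from Theorem \ref{the1} is a positive finite quantity.
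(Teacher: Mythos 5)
Your proof is correct and follows essentially the same route as the paper: you express $\Phi_{1,\infty}(\Om)=F(\Om)/(\lambda(\Om)\|w_{\Om}\|_{\infty})$ exactly as in \eqref{e10b.3} and then apply Theorem~\ref{the2} to the numerator and Theorem~\ref{the1} to the denominator. The only addition is your (correct) observation that finite measure plus Faber--Krahn gives $\lambda(\Om)>0$, which the paper leaves implicit.
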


\smallskip

Corollary  \ref{cor1}  complements the results of the mean to max ratio of the torsion function in \cite{HLP}, and of the $p$-torsion from \cite{LBDB}. In the latter, the authors showed that this ratio is also bounded away from $1$ in the case $p>m$.

 \smallskip
 In order to have a global view of the inequalities in \eqref{e2},
we introduce the following families of sets:
\begin{equation}\label{e3c}\mathfrak{A}_o={\{\Om: \Om\,\textup{is non-empty, open in}\, \R^m\}},\end{equation}
\begin{equation}\label{e3d}\mathfrak{A}_{sc}={\{\Om: \Om\,\textup{is simply connected, non-empty, open in}\, \R^2\}},\end{equation}
\begin{equation}\label{e3e}\mathfrak{A}_{co}={\{\Om: \Om\,\textup{is non-empty, open, convex in}\, \R^m\}}. \end{equation}

It remains an open question to prove existence of a minimiser of the variational problem
\begin{equation*}
\inf\big\{\|w_{\Om}\|_{\infty}\lambda(\Om): \Om\in \mathfrak{A}_{sc}\,, \lambda(\Om)>0\big\}.
\end{equation*}

{The variational problem}
$$\inf\big\{\|w_{\Om}\|_{\infty}\lambda(\Om): \Om\in \mathfrak{A}_{o}\,, \lambda(\Om)>0\big\}$$
does not have a minimiser, and {the infimum is asymptotically attained by the} sequences described in \eqref{e4}.

It has been shown in \cite[3.12]{P} that
$$\inf\big\{\|w_{\Om}\|_{\infty}\lambda(\Om): \Om\in \mathfrak{A}_{co}\,, \lambda(\Om)>0\big\}=\frac{\pi^2}{8},$$
with equality if $\Omega$ is the open connected set, bounded by two parallel $(m-1)$-dimensional hyperplanes.

 \smallskip
The constant $c_m$ received further  improvements in \cite[Theorem 1.5]{HV}, where it was shown  that
\begin{equation*}
c_m\le \frac{m}{8}+\frac14\sqrt{5\big(1+\frac14\log 2\big)}\sqrt{m}+1.
\end{equation*}
The sharp value of the constant $c_m$ is not known. {By analogy with} the minimum problem, one may investigate the existence of a maximiser of the variational problem
\begin{equation}\label{e3b}
\sup\big\{\|w_{\Om}\|_{\infty}\lambda(\Om): \Om\in \mathfrak{A}\,, \lambda(\Om)>0\big\},
\end{equation}
for the admissible classes  defined in \eqref{e3c}, \eqref{e3d}   and  \eqref{e3e}.
It is not known whether \eqref{e3b} admits a maximiser for \eqref{e3c} or for  \eqref{e3d}. In the class of convex sets  \eqref{e3e}, Henrot, Lucardesi and Philippin proved the existence of a maximiser in \cite[Theorem 3.2]{HLP}, and they conjectured that, for $m=2$, the maximiser is an equilateral triangle.

Similarly the variational problem
\begin{equation*}
\sup\big\{F(\Om): \Om\in \mathfrak{A}\,, |\Om|<\infty \big\},
\end{equation*}
remains open for \eqref{e3d}. For  \eqref{e3c} there exist maximising sequences described in \eqref{e8} but no maximiser,  see \cite{vdB2}.
For \eqref{e3e} and $m=2$ it has been conjectured that the supremum equals $\pi^2/12$ achieved for a sequence of thinning rectangles.
{See \cite{RB} for some recent progress on this conjecture.}

 \smallskip
This paper is organised as follows. The proofs of Theorem \ref{the1}  and Proposition \ref{the3} are deferred to Section \ref{sec2} while the proof of Theorem \ref{the2}, Proposition \ref{prop1} and Corollary \ref{cor1} are deferred to Section \ref{sec3}.

\section{ Proof of Theorem \ref{the1}.\label{sec2}}
Throughout the paper we denote an open ball centred at $x$ with radius $r$ by $B(x;r)$ or $B_r(x)$.   We put $B_r={B(0;r)}$.

We first prove Proposition \ref{the3}, and begin with a few basic lemmas. Let $p_{\Om}(x,y;t),\,x\in\Om,\,y\in\Om,\,t>0$ be the Dirichlet heat kernel for $\Omega$.
Then
\begin{equation*}
G_{\Om}(x,y)=\int_{(0,\infty)}dt\,p_{\Om}(x,y;t)
\end{equation*}
is the kernel of the resolvent of the Dirichlet Laplacian  acting in $L^2(\Om)$.
\begin{lemma}\label{lem1} Let $\Om\subset \R^2$ be non-empty, open and simply connected and with finite measure. If $0<\Lambda<\infty$, then
\begin{equation}\label{e16}
\int_{\{y\in\Om:|x-y|<\Lambda\}}dy\,G_{\Om}(x,y)\le \frac43d_{\Om}(x)^{1/2}\Lambda^{3/4},
\end{equation}
and
\begin{equation}\label{e17}
\int_{\Om}dy\,G_{\Om}(x,y)\le \frac{4}{3\pi^{3/4}}d_{\Om}(x)^{1/2}|\Om|^{3/4}.
\end{equation}
\end{lemma}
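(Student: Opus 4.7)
My plan is to derive both inequalities from a single pointwise Green's function estimate obtained via conformal mapping. Since $\Om$ is simply connected with finite measure, $\Om\neq\R^2$, so the Riemann mapping theorem provides a conformal $\phi\colon\Om\to\mathbb{D}$ with $\phi(x)=0$, and conformal invariance of the Dirichlet Green's function gives $G_\Om(x,y)=\tfrac{1}{2\pi}\log(1/|\phi(y)|)$. The Koebe growth theorem applied to the normalized univalent map $F(z):=\phi'(x)(\phi^{-1}(z)-x)$ on $\mathbb{D}$ (which satisfies $F(0)=0$, $F'(0)=1$) yields $|x-y|\,|\phi'(x)|\le|\phi(y)|/(1-|\phi(y)|)^2$, and the Koebe $1/4$ theorem on the same $F$ gives $|\phi'(x)|\ge 1/(4d_\Om(x))$. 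Writing $h\colon(0,\infty)\to(0,1)$ for the inverse of $u\mapsto u/(1-u)^2$, the two estimates combine into the pointwise bound
\[
G_\Om(x,y)\le \frac{1}{2\pi}\log\frac{1}{h\bigl(|x-y|/(4d_\Om(x))\bigr)}.
\]

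The main analytic input is the elementary inequality $\log(1/h(s))\le 1/\sqrt{s}$ for every $s>0$. From the defining relation, $\sqrt{s}=\sqrt{h}/(1-h)$, so after substituting $u=\sqrt{h}\in(0,1)$ the claim reduces to $g(u):=1/u-u+2\log u\ge 0$; this follows from $g(1)=0$ and $g'(u)=-(u-1)^2/u^2\le 0$, which forces $g$ to be non-increasing on $(0,1]$.

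For \eqref{e16}, I pass to polar coordinates centered at $x$ and use the substitution $s=r/(4d_\Om(x))$, rewriting the integral as $16\,d_\Om(x)^2\int_0^{\Lambda/(4d_\Om(x))}s\log(1/h(s))\,ds$; the calculus lemma together with $\int_0^a\sqrt{s}\,ds=\tfrac{2}{3}a^{3/2}$ then yields the desired bound of the form $\tfrac{4}{3}d_\Om(x)^{1/2}\Lambda^{3/2}$, which is exactly the shape needed for the subsequent choice $\Lambda=4c\lambda(\Om)^{-1/2}$ in \eqref{e12}. For \eqref{e17}, the pointwise bound depends on $y$ only through $|x-y|$ and is therefore a radially symmetric decreasing function of $y$ about $x$; by the Hardy--Littlewood rearrangement inequality, $\mathbf{1}_\Om$ can be replaced by $\mathbf{1}_{B(x,R)}$ for the Euclidean ball of the same area, namely $R=\sqrt{|\Om|/\pi}$, and applying the already-proved \eqref{e16} with $\Lambda=R$ delivers $\tfrac{4}{3\pi^{3/4}}d_\Om(x)^{1/2}|\Om|^{3/4}$. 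The principal obstacle is the sharpness of the calculus inequality $\log(1/h(s))\le 1/\sqrt{s}$, which pins down the explicit constant $4/3$; once it is established, the rest reduces to classical distortion theorems and routine integration.
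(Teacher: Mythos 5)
Your proof is correct and takes essentially the same route as the paper: the paper's starting inequality (its (18)) is precisely the Koebe growth/one-quarter bound you re-derive via the Riemann map, the calculus lemma $\log(1/h(s))\le 1/\sqrt{s}$ is algebraically equivalent to the paper's inequality $\log((1+\theta)^{1/2}+\theta^{1/2})\le\theta^{1/2}$ under $\theta=1/(4s)$, and the polar integration plus radial rearrangement are identical; the only difference is that the paper cites the pointwise Green's function bound from the literature rather than re-proving it. Note that both your computation and the paper's own (its (21)) yield $\tfrac{4}{3}d_\Om(x)^{1/2}\Lambda^{3/2}$, so the exponent $3/4$ in the displayed statement of (16) is a typo for $3/2$.
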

\begin{proof}
It was shown both in \cite[(5.14)]{MvdB3} and \cite[(2.8)]{MP} that
\begin{align}\label{e18}
G_{\Om}(x,y)&\le \frac{1}{2\pi}\log \frac{(d_{\Om}(x)+|x-y|)^{1/2}+d_{\Om}(x)^{1/2}}{(d_{\Om}(x)+|x-y|)^{1/2}-d_{\Om}(x)^{1/2}}\nonumber\\&
=\frac{1}{\pi}\log\Big(\Big(1+\frac{d_{\Om}(x)}{|x-y|}\Big)^{1/2}+\Big(\frac{d_{\Om}(x)}{|x-y|}\Big)^{1/2}\Big).
\end{align}
The proof of \eqref{e18} uses the conformal map of $\Om$ onto the unit disc, together with K\"obes one quarter theorem.
Using the inequality
\begin{equation}\label{e19}
\log((1+\theta)^{1/2}+\theta^{1/2})\le \theta^{1/2},\, \theta\ge 0,
\end{equation}
we obtain that
\begin{equation}\label{e20}
G_{\Om}(x,y)\le \frac{1}{\pi}\Big(\frac{d_{\Om}(x)}{|x-y|}\Big)^{1/2},
\end{equation}
Integrating both sides of \eqref{e20} yields
\begin{align}\label{e21}
\int_{\{y\in\Om:|x-y|<\Lambda\}}dy\,G_{\Om}(x,y)&\le \frac{1}{\pi}\int_{\{y\in\Om:|x-y|<\Lambda\}}dy\,\Big(\frac{d_{\Om}(x)}{|x-y|}\Big)^{1/2}\nonumber\\&
\le \frac{1}{\pi}\int_{B(x;\Lambda)}dy\,\Big(\frac{d_{\Om}(x)}{|x-y|}\Big)^{1/2}\nonumber\\&
=\frac43d_{\Om}(x)^{1/2}\Lambda^{3/2}.
\end{align}
To prove \eqref{e17} we rearrange the set $\Om$ radially around the point $x$, and then use \eqref{e21} with
\begin{equation*}
\Lambda=\Big(\frac{|\Om|}{\pi}\Big)^{1/2}.
\end{equation*}
\end{proof}
We note that, thanks to \eqref{e19}, the numerical constant in the right-hand side of \eqref{e17} improves upon the ones stated in \cite[(5.17]{MvdB3} and \cite[(2.9)]{MP} respectively.

Below we state and prove an upper bound for $G_{\Om}(x,y)$ which decays faster than $|x-y|^{-1/2}$ for large $|x-y|$.
\begin{lemma}\label{lem2}
If $\Om$ is an open set in $\R^2$ for which the spectrum of the Dirichlet Laplacian is discrete, then
\begin{equation*}
G_{\Om}(x,y)\le \frac{2^{1/2}}{4\pi}\int_0^{\infty}\frac{dt}{t}e^{-\frac{|x-y|^2}{8t}-\frac{t\lambda(\Om)}{4}}.
\end{equation*}
\end{lemma}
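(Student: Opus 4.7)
The strategy is to derive a pointwise upper bound on the Dirichlet heat kernel $p_\Om(x,y;t)$ and then integrate it against $t$, using
\[
G_\Om(x,y)=\int_0^{\infty}p_\Om(x,y;t)\,dt.
\]
The plan is to combine two standard upper bounds on $p_\Om$ by taking their geometric mean: one bound supplies the Gaussian spatial decay $e^{-|x-y|^2/(8t)}$ and the other supplies the spectral decay $e^{-t\lambda(\Om)/4}$. The constant $2^{1/2}/(4\pi)$ then emerges from the constants in those two bounds.

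For the first bound I would use domain monotonicity $p_\Om\le p_{\R^2}$ to get
\[
p_\Om(x,y;t)\le \frac{1}{4\pi t}\,e^{-|x-y|^2/(4t)}.
\]
For the second bound, discreteness of the spectrum yields the expansion $p_\Om(x,y;t)=\sum_k e^{-\lambda_k t}\phi_k(x)\phi_k(y)$, so Cauchy--Schwarz gives $p_\Om(x,y;t)\le[p_\Om(x,x;t)p_\Om(y,y;t)]^{1/2}$. Splitting $e^{-\lambda_k t}=e^{-\lambda_k t/2}e^{-\lambda_k t/2}$ and using $\lambda_k\ge\lambda(\Om)$ gives $p_\Om(x,x;t)\le e^{-\lambda(\Om)t/2}p_\Om(x,x;t/2)$; combined with the free bound $p_\Om(x,x;t/2)\le p_{\R^2}(x,x;t/2)=1/(2\pi t)$ (and similarly at $y$), this yields
\[
p_\Om(x,y;t)\le \frac{1}{2\pi t}\,e^{-\lambda(\Om)t/2}.
\]

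Taking the geometric mean of the two displayed bounds produces
\[
p_\Om(x,y;t)\le \left(\frac{1}{4\pi t}e^{-|x-y|^2/(4t)}\cdot \frac{1}{2\pi t}e^{-\lambda(\Om)t/2}\right)^{1/2}=\frac{2^{1/2}}{4\pi}\cdot\frac{1}{t}\,e^{-|x-y|^2/(8t)-\lambda(\Om)t/4},
\]
and integration over $t\in(0,\infty)$ gives the asserted estimate. There is no real obstacle here; the only thing to notice is that interpolating a spatially-decaying bound against a spectrally-decaying bound by geometric mean halves both exponents and exactly reproduces the $|x-y|^2/(8t)$, $\lambda(\Om)t/4$ and $2^{1/2}/(4\pi)$ in the statement.
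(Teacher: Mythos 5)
Your proposal is correct and is essentially the same argument as the paper's: both derive $p_{\Om}(x,y;t)\le \big(p_{\Om}(x,x;t)p_{\Om}(y,y;t)\big)^{1/2}$ via the semigroup property and Cauchy--Schwarz, bound the on-diagonal kernel by $e^{-t\lambda(\Om)/2}/(2\pi t)$ using the spectral expansion and domain monotonicity, and then combine this with the free Gaussian kernel by taking a geometric mean before integrating in $t$. The paper packages the geometric-mean step as $p_{\Om}(x,y;t)\le\big(p_{\Om}(x,x;t)p_{\Om}(y,y;t)\big)^{1/4}p_{\R^2}(x,y;t)^{1/2}$, but this is the same interpolation you wrote out as $\sqrt{(\text{spectral bound})\cdot(\text{Gaussian bound})}$.
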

\begin{proof}
The following inequality is well known. By the semigroup property of the Dirichlet heat kernel and Cauchy-Schwarz,
\begin{align}\label{e26}
p_{\Om}(x,y;t)&=\int_{\Om}dz\,p_{\Om}(x,z;t/2)p_{\Om}(z,y;t/2)\nonumber\\&
\le \Big(\int_{\Om}dz\,p_{\Om}(x,z;t/2)^2\Big)^{1/2}\Big(\int_{\Om}dz\,p_{\Om}(z,y;t/2)^2\Big)^{1/2}\nonumber\\&
=\Big(p_{\Om}(x,x;t)p_{\Om}(y,y;t)\Big)^{1/2}.
\end{align}
Furthermore by the spectral representation of the Dirichlet heat kernel and monotonicity
\begin{align}\label{e27}
p_{\Om}(x,x;t)&\le e^{-t\lambda(\Om)/2}p_{\Om}(x,x;t/2)\nonumber\\&
\le e^{-t\lambda(\Om)/2}p_{\R^2}(x,x;t/2)\nonumber\\&
=\frac{e^{-t\lambda(\Om)/2}}{2\pi t}.
\end{align}
We obtain, by \eqref{e26} and \eqref{e27},
\begin{align*}
p_{\Om}(x,y;t)&\le \Big(p_{\Om}(x,x;t)p_{\Om}(y,y;t)\Big)^{1/4}p_{\Om}(x,y;t)^{1/2}\nonumber\\&
\le \Big(p_{\Om}(x,x;t)p_{\Om}(y,y;t)\Big)^{1/4}p_{\R^2}(x,y;t)^{1/2}\nonumber\\&
\le \frac{2^{1/2}}{4\pi t}e^{-\frac{t\lambda(\Om)}{4}-\frac{|x-y|^2}{8t}}.
\end{align*}
\end{proof}

To prove Proposition \ref{the3} we now let $\Lambda>0$ be arbitrary.
By Lemma \ref{lem2} and Tonelli's Theorem
\begin{align}\label{e30}
\int_{\{y\in\Om:|x-y|\ge\Lambda\}}dy\,G_{\Om}(x,y)&\le \frac{2^{1/2}}{4\pi}\int_0^{\infty}\frac{dt}{t}e^{-\frac{t\lambda(\Om)}{4}}\int_{\{y\in\Om:|x-y|\ge\Lambda\}} dy\,e^{-\frac{|x-y|^2}{8t}}\nonumber\\&
\le \frac{2^{1/2}}{4\pi}\int_0^{\infty}\frac{dt}{t}e^{-\frac{t\lambda(\Om)}{4}}\int_{\{y\in\R^2:|x-y|\ge\Lambda\}} dy\,e^{-\frac{|x-y|^2}{8t}}\nonumber\\&
=2^{3/2}\int_0^{\infty}dt\, e^{-\frac{t\lambda(\Om)}{4}-\frac{\Lambda^2}{8t}}\nonumber\\&
\le 2^{3/2}\int_0^{\infty} dt\,e^{-\frac{t\lambda(\Om)}{8}}\cdot \sup_{t>0}e^{-\frac{t\lambda(\Om)}{8}-\frac{\Lambda^2}{8t}}\nonumber\\&
=\frac{2^{9/2}}{\lambda(\Om)}e^{-\lambda(\Om)^{1/2}\Lambda/4}.
\end{align}

By \eqref{e1b}, \eqref{e16} and \eqref{e30}
\begin{align}\label{e32}
w_{\Om}(x)&=\int_{\Om}dy\,G_{\Om}(x,y)\nonumber\\&\le \frac{4}{3}d_{\Om}(x)^{1/2}\Lambda^{3/2}+\frac{2^{9/2}}{\lambda(\Om)}e^{-\lambda(\Om)^{1/2}\Lambda/4}.
\end{align}
For $c\ge 0$, we substitute
\begin{equation*}
\Lambda=4c\lambda(\Om)^{-1/2}
\end{equation*}
into \eqref{e32}. This gives \eqref{e12}.

If a maximum point $x_{w_{\Om}}$ exists, then by \eqref{e2}
\begin{equation*}
w_{\Om}(x_{w_{\Om}})=\|w_{\Om}\|_{\infty}\ge \lambda(\Om)^{-1}.
\end{equation*}
This, together with \eqref{e12}, gives \eqref{e12a}. Taking the maximum over all $c\ge \frac92\log 2$ in \eqref{e12a} gives \eqref{e12b}.

\smallskip

Following the lines of \cite[Theorem 5.3]{vdB}, we obtain by \eqref{e1b} that
\begin{align}\label{e35}
w_{\Om}(x)&
\ge \int_{\Om}dy\,G_{\Om}(x,y)\frac{u_{\Om}(y)}{\|u_{\Om}\|_{\infty}}\nonumber\\&
=\frac{1}{\lambda(\Om)}\frac{u_{\Om}(x)}{\|u_{\Om}\|_{\infty}}.
\end{align}
By \eqref{e12} and \eqref{e35} we obtain that
\begin{equation}\label{e13a}
\frac{32}{3}d_{\Om}(x)^{1/2}\lambda(\Om)^{1/4}c^{3/2}+2^{9/2}e^{-c}\ge \frac{u_{\Om}(x)}{\|u_{\Om}\|_{\infty}}.
\end{equation}
If $x_{u_{\Om}}$ is a maximum point for the first Dirichlet eigenfunction then substituting $x=x_{u_{\Om}}$ into \eqref{e13a} gives for $c\ge 0$ that
\begin{equation}\label{e13f}
d_{\Om}(x_{u_{\Om}})^{1/2}\ge \frac{3}{32}\frac{1-2^{9/2}e^{-c}}{c^{3/2}}\lambda(\Om)^{-1/4},\,c\ge 0.
\end{equation}
The right-hand side of \eqref{e13f} is non-negative for $c\ge\frac92\log 2$. So we obtain that
\begin{equation}\label{e13c}
d_{\Om}(x_{u_{\Om}})\ge \frac{3^2}{2^{10}}\frac{(1-2^{9/2}e^{-c})^2}{c^{3}}\lambda(\Om)^{-1/2},\,c\ge\frac92\log 2.
\end{equation}
Taking the maximum over all such $c$ gives \eqref{e13} with \eqref{e12c}.

Next we show that \eqref{e13b} implies existence of a maximum point of $u_{\Om}$. Let $(x_n)$ be a sequence of points in $\Om$ such that $\lim_n u_{\Om}(x_n)=\|u_{\Om}\|_{\infty}$. Without loss of generality we may assume that for all $n$, $u_{\Om}(x_n)\ge \frac12\|u_{\Om}\|_{\infty}$.
Inequality \eqref{e13a} implies that $d_{\Om}(x_n)\ge c'\lambda(\Om)^{-1/2}$ for some $c'>0$. Hence  all $(x_n)$ are in the closed set $\{x\in\Om: d_{\Om}(x)\ge c'\lambda(\Om)^{-1/2}\}$.  To prove compactness it suffices to show that this set is bounded. Suppose to the contrary. Then $\{x\in\Om: d_{\Om}(x)\ge c'\lambda(\Om)^{-1/2}\}$ contains a sequence $(y_n)$ such that $|y_n-y_m|\ge \lambda(\Om)^{-1/2}, m\ne n$. Hence
\begin{equation}\label{e13d}
\{x\in\Om: d_{\Om}(x)\ge 2^{-1}c'\lambda(\Om)^{-1/2}\}\supset\bigcup_nB(y_n;2^{-1}c'\lambda(\Om)^{-1/2}).
\end{equation}
The balls in the right-hand side of \eqref{e13d} are disjoint, and their union has infinite measure. This contradicts \eqref{e13b}. Hence $\{x\in\Om: d_{\Om}(x)\ge c\lambda(\Om)^{-1/2}\}$ is compact,   and $(x_n)$ contains a convergent subsequence also denoted by $(x_n)$ and converging to say $x^*$. Since the restriction of $u_{\Om}$ to $\{x\in\Om: d_{\Om}(x)\ge c\lambda(\Om)^{-1/2}\}$ is continuous, $x_n\rightarrow x^*$ implies $u_{\Om}(x_n)\rightarrow u_{\Om}(x^*)$. By hypothesis $\lim_n u_{\Om}(x_n)=\|u_{\Om}\|_{\infty}$. It follows that $u_{\Om}(x^*)=\|u_{\Om}\|_{\infty}$, and so $x^*=x_{u_{\Om}}$. Hence $u_{\Om}(x_{u_{\Om}})=\|u_{\Om}\|_{\infty}$.
\hfill$\square$

\medskip

\noindent{\it Proof of Theorem \ref{the1}.} Let $\Om\subset \R^2$ be an open, simply connected set such that $\lb(\Om)>0$, and $\|w_\Om\|_\infty < +\infty$.  Since $r \mapsto w_{\Om \cap B_r}$ is an increasing sequence of functions and $ w_{\Om \cap B_r}\to w_\Om$ pointwise almost everywhere, we have that
$$\lim_{r \to +\infty} \| w_{\Om \cap B_r}\|_\infty = \|w_\Om\|_\infty.$$
Moreover, since $C_c^\infty (\Om)$ is dense in $H^1_0(\Om)$, we have $\lim_{r \to +\infty}\lb( \Om \cap B_r)= \lb(\Om)$.
This implies that it is enough to prove Theorem \ref{the1} for bounded, simply connected sets. Indeed, if we do so, we choose from $\Om\cap B_r$  the (possibly $r$-dependent) component which supports the first eigenvalue of $\Om \cap B_r$ for which the inequality is true. By monotonicity of the torsion function with respect to inclusion, the inequality holds on the full set $\Om \cap B_r$. Passing to the limit $r \to +\infty$ we get it on $\Om$. From now on, we can assume without loss of generality that $\Om$ is bounded.

We refine \eqref{e35} so that
\begin{equation}\label{e37}
w_{\Om}(x)=\frac{1}{\lambda(\Om)}\frac{u_{\Om}(x)}{\|u_{\Om}\|_{\infty}}+\int_{\Om}dy\,G_{\Om}(x,y)\Big(1-\frac{u_{\Om}(y)}{\|u_{\Om}\|_{\infty}}\Big).
\end{equation}
At the point $q:=x_{u_{\Om}}$ we obtain, by \eqref{e35} and \eqref{e37},
\begin{align}\label{e38}
\|w_{\Om}\|_{\infty}&\ge w_{\Om}(x_{u_{\Om}})=\frac{1}{\lambda(\Om)}+\int_{\Om}dy\,G_{\Om}(q,y)\Big(1-\frac{u_{\Om}(y)}{\|u_{\Om}\|_{\infty}}\Big)\nonumber\\&
= \frac{1}{\lambda(\Om)}+\int_{\Om}dy\,G_{\Om}(q,y)\Big(1-\frac{u_{\Om}(y)}{\|u_{\Om}\|_{\infty}}\Big)_{+}\nonumber\\&
\ge \frac{1}{\lambda(\Om)}+\int_{\Om}dy\,G_{\Om}(q,y)\Big(1-\lambda(\Om)w_{\Om}(y)\Big)_{+},
\end{align}
where $(.)_{+}$ denotes the positive part.
The Green's function for the disc satisfies
\begin{equation}\label{e40}
G_{B(q;d_{\Om}(q))}(q,y)=\frac{1}{2\pi}\log\Big(\frac{d_{\Om}(q)}{d_{\Om}(q)-d_{\Om}(y)}\Big)\ge \frac{1}{2\pi}\frac{d_{\Om}(y)}{d_{\Om}(q)}.
\end{equation}
Since the Dirichlet heat kernel, and hence the Green function, is monotone under set inclusion we find by \eqref{e38} and \eqref{e40} that
\begin{align}\label{e39}
\int_{\Om}dy\,G_{\Om}(q,y)\Big(1-\lambda(\Om)w_{\Om}(y)\Big)_{+}\ge \frac{1}{2\pi}\int_{B(q;d_{\Om}(q))}\frac{d_{\Om}(y)}{d_{\Om}(q)}\Big(1-\lambda(\Om)w_{\Om}(y)\Big)_{+}.
\end{align}
By Proposition \ref{the3}(i) we have for $c>0$,
\begin{align}\label{e40g}
\Big(1-\lambda(\Om)w_{\Om}(y)\Big)_{+}&\ge \Big(1-2^{9/2}e^{-c}-\frac{32}{3}d_{\Om}(y)^{1/2}\lambda(\Om)^{1/4}c^{3/2}\Big)_+\nonumber\\&
=(1-2^{9/2}e^{-c})\Big(1-\Big(\frac{d_{\Om}(y)}{R_c}\Big)^{1/2}\Big)_+,
\end{align}
where, suppressing $\lambda$-dependence,
\begin{equation}\label{e40a}
R_c=\frac{3^2}{2^{10}}\frac{(1-2^{9/2}e^{-c})^2}{c^3}\lambda(\Om)^{-1/2}.
\end{equation}
By \eqref{e39} and \eqref{e40g},
\begin{align}\label{e40h}
\int_{\Om}dy\,G_{\Om}(q,y)&\Big(1-\lambda(\Om)w_{\Om}(y)\Big)_{+}\nonumber\\&
\ge \frac{1-2^{9/2}e^{-c}}{2\pi}\int_{B(q;d_{\Om}(q))}\frac{d_{\Om}(y)}{d_{\Om}(q)}\Big(1-\Big(\frac{d_{\Om}(y)}{R_c}\Big)^{1/2}\Big)_+.
\end{align}
Let $\overline{q}\in{ \R^2}\setminus\Om$ be such that $|\overline{q}-q|=d_{\Om}(q)$. By \eqref{e13c} and \eqref{e40a}, $|\overline{q}-q| \ge R_c$.  Let $q^*$ be the point on the straight line segment from $\overline{q}$ to $q$ such that $|\overline{q}-q^*|=\frac12R_c$.
We claim that
\begin{equation}\label{e40b}
B(q^*;\frac12R_c)\subset B(q;d_{\Om}(q)).
\end{equation}
Since $|q^*-q|=d_{\Om}(q)-|\overline{q}-q^*|=d_{\Om}(q)-\frac12R_c,$ we have for $y\in B(q^*;\frac12R_c)$ that $|y-q|< |y-q^*|+|q^*-q|< \frac12R_c+d_{\Om}(q)-\frac12R_c=d_{\Om}(q)$, which is \eqref{e40b}.
By \eqref{e40b},
\begin{align}\label{e49}
\int_{B(q;d_{\Om}(q))}&dy\,\frac{d_{\Om}(y)}{d_{\Om}(q)}\Big(1-\Big(\frac{d_{\Om}(y)}{R_c}\Big)^{1/2}\Big)_{+}\nonumber\\&\ge \int_{B(q^*;\frac12R_c)}dy\,\frac{d_{\Om}(y)}{d_{\Om}(q)}\Big(1-\Big(\frac{d_{\Om}(y)}{R_c}\Big)^{1/2}\Big).
\end{align}
The triangle inequality gives that
\begin{equation}\label{e40e}
\frac12R_c-|y-q^*|\le d_{\Om}(y)\le \frac12R_c+|y-q^*|.
\end{equation}
By \eqref{e40e},
\begin{align}\label{e49a}
\int_{B(q^*;\frac12R_c)}&dy\,\frac{d_{\Om}(y)}{d_{\Om}(q)}\Big(1-\Big(\frac{d_{\Om}(y)}{R_c}\Big)^{1/2}\Big)\nonumber\\&
\ge \int_{B(q^*;\frac12R_c)}dy\,\frac{\frac12R_c-|q^*-y|}{d_{\Om}(q)}\Big(1-\Big(\frac{\frac12R_c+|q^*-y|}{R_c}\Big)^{1/2}\Big)\nonumber\\&
=2\pi\int^{\frac12R_c}_0dr\,r\frac{\frac12R_c-r}{d_{\Om}(q)}\Big(1-\Big(\frac{\frac12R_c+r}{R_c}\Big)^{1/2}\Big).
\end{align}
Changing the variable $r=\frac12R_c(\theta^2-1)$ gives that
\begin{align}\label{e49b}
2\pi\int^{\frac12R_c}_0dr\,&r\frac{\frac12R_c-r}{d_{\Om}(q)}\Big(1-\Big(\frac{\frac12R_c+r}{R_c}\Big)^{1/2}\Big)\nonumber\\&
=\frac{2\pi\Big(\frac12R_c\Big)^3}{d_{\Om}(q)}\Big(\frac16-2^{1/2}\int_1^{\sqrt 2}d\theta\,\theta^2(\theta^2-1)(2-\theta^2)\Big)\nonumber\\&
=\frac{3^5(67-44\sqrt 2)\pi}{2^{33}\cdot5\cdot7}\frac{(1-2^{9/2}e^{-c})^6}{c^9d_{\Om}(q)}\lambda(\Om)^{-3/2}.
\end{align}

\begin{figure}[!ht]
\centering
\begin{tikzpicture}
\draw[black, thin] (0,0) circle(3.0);
\node at (-2,0.9) {$B(q;d_{\Om}(q))$};
\node at (0,-3.3) {$\overline{q}$};
\filldraw[black] (0,0) circle (1pt) node[anchor=west]{$q$};
\filldraw[black] (0,-3.0) circle (1pt);
\draw[black, thin] (0,-1.89) circle(1.11);
\draw[gray,very thin](0,0)--(0,-3);
\filldraw[black] (0,-1.89) circle(1pt);
\node at (0.3,-1.89) {$q^*$};
\end{tikzpicture}
\caption{$B(q^*;\frac12R_c)\subset B(q;d_{\Om}(q))\subset\Omega,\,d_{\Om}(q)=|q-\overline{q}|\ge R_c$.}
\label{fig2}
\end{figure}

Since $\Omega$ contains a ball with radius $r(\Om)$ we have by monotonicity of Dirichlet eigenvalues that
\begin{equation}\label{e50}
\lambda(\Om)\le \frac{j_0^2}{r(\Om)^2},
\end{equation}
where $j_0^2$ is the first Dirichlet eigenvalue of a ball with radius $1$.
By \eqref{e40h}, \eqref{e49}, \eqref{e49a}, \eqref{e49b} and \eqref{e50} we obtain that
\begin{equation}\label{e49c}
\int_{\Om}dy\,G_{\Om}(q,y)\Big(1-\lambda(\Om)w_{\Om}(y)\Big)_{+}\ge \frac{3^5(67-44\sqrt 2)}{2^{34}\cdot5\cdot7j_0}\frac{(1-2^{9/2}e^{-c})^7}{c^9}\lambda(\Om)^{-1}.
\end{equation}
Theorem \ref{the1} follows from \eqref{e38} and \eqref{e49c} by taking the maximum over
\newline  $c\in [\frac92\log 2,\infty)$.
\hspace*{\fill }$\square $

\section{Proof of Theorem \ref{the2}.\label{sec3}}
The key result to prove Theorem \ref{the2} is Proposition \ref{prop1}.
Let $\Om\subset \R^2$ be a  non-empty, open and simply connected set with finite measure. Assume we know that
\begin{equation*}
\ds \frac{\Big (\ds \int_\Om w_\Om \Big)^2}{|\Om| \ds \int_\Om w_\Om^2 }\le 1-\eta.
\end{equation*}

Taking $w_\Om$ as a test function for the first eigenvalue we have
$$T(\Om) \lb(\Om) \le T(\Om) \frac{\ds \int_\Om |\nabla w_\Om|^2 }{\ds \int_\Om w_\Om^2 }= \frac{\Big(\ds \int_\Om  w_\Om \Big)^2}{\ds \int_\Om w_\Om^2 }\le (1-\eta)|\Om|,$$
so that  Theorem \ref{the2} holds true.

Before proving Proposition \ref{prop1}, we give a technical result.
\begin{lemma}\label{vdb202}
 Let $U\subset \R^2$ be a non-empty, open, simply connected set, let $x_0\in \partial U$ and  let $R_0 >0$. Assume that $v\in H^1(U \cap B_{2R_0}(x_0))$ satisfies the following
\begin{equation}\label{vdb203}
\begin{cases}
-\Delta v \le  1&\hbox{in }U \cap B_{2R_0}(x_0),\\
v=0&\hbox{on }\partial U, \\
v \le  c  &\hbox{on }\partial  B_{2R_0}(x_0),
\end{cases}
\end{equation}
where $c \ge 1$.
Then $\forall x \in B_{R_0}(x_0)\cap U$ we have  that
\begin{equation}\label{vdb204}
v(x) \le  c  \Big( 1+ \frac{\pi +\pi^2}{2R_0^2}\Big) \frac 43 \pi^{-\frac 34} (4\pi R_0^2)^\frac34 |x-x_0|^\frac 12.
\end{equation}
\end{lemma}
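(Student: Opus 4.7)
Set $W := U \cap B_{2R_0}(x_0)$. The plan is to split $v = v_1 + v_2$ on $W$, where $v_1 \in H_0^1(W)$ solves $-\Delta v_1 = -\Delta v$ in $W$ and $v_2 := v - v_1$ is harmonic in $W$ with $v_2|_{\partial W} = v|_{\partial W}$. Then $v_1 \le w_W$ by comparison (since $-\Delta v_1 \le 1$ and $v_1$ has zero trace), and because $v = 0$ on $\partial U$ while $v \le c$ on $\partial B_{2R_0}(x_0) \cap \overline U$, the maximum principle gives $v_2 \le c\,\omega_W$, where $\omega_W$ is the harmonic measure of $\partial B_{2R_0}(x_0) \cap \overline U$ in $W$. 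Thus $v(x) \le w_W(x) + c\,\omega_W(x)$.

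To control $w_W$ via \eqref{e17}, I would first check that every connected component $V$ of $W$ is simply connected: any simple closed curve $\gamma \subset V$ bounds a disk $D \subset U$ by simple-connectedness of $U$, and $D$ is contained in the convex set $B_{2R_0}(x_0)$ since $\gamma \subset B_{2R_0}(x_0)$, so $D \subset V$. With $d_W(x) \le |x-x_0|$ (because $x_0 \in \partial U \subset \partial W$) and $|W| \le 4\pi R_0^2$, applying \eqref{e17} componentwise gives
\begin{equation*}
w_W(x) \le \tfrac{4}{3\pi^{3/4}}(4\pi R_0^2)^{3/4}|x-x_0|^{1/2}.
\end{equation*}

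The main obstacle is to establish the companion estimate
\begin{equation*}
c\,\omega_W(x) \le c\,\tfrac{\pi+\pi^2}{2R_0^2} \cdot \tfrac{4}{3\pi^{3/4}}(4\pi R_0^2)^{3/4}|x-x_0|^{1/2}
\end{equation*}
with the explicit constant $(\pi+\pi^2)/(2R_0^2)$. Since $c\,\omega_W$ is a bounded harmonic function vanishing on $\partial U$ and equal to $c$ on $\partial B_{2R_0}(x_0) \cap \overline U$, my approach is to construct a torsion-type majorant on $W$ by solving an auxiliary torsion problem whose source term absorbs the boundary datum $c$ through a boundary layer near $\partial B_{2R_0}(x_0)$, and then apply \eqref{e17} componentwise; the perimeter $2\pi R_0$ and the area $\pi R_0^2$ of $B_{R_0}(x_0)$ would enter the estimate and account for the $\pi + \pi^2$ contribution. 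Equivalently, one could compare $\omega_W$ to $w_W/R_0^2$ via a boundary Harnack / Beurling-type argument exploiting the simple-connectedness of $U$.

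Putting the two bounds together yields $v(x) \le \bigl(1 + c \cdot \tfrac{\pi+\pi^2}{2R_0^2}\bigr)\tfrac{4}{3\pi^{3/4}}(4\pi R_0^2)^{3/4}|x-x_0|^{1/2}$, and since $c \ge 1$ the elementary inequality $1 + cK \le c(1+K)$ (valid for any $K \ge 0$) converts this into the form \eqref{vdb204}.
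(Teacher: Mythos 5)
There is a genuine gap: you have correctly set up the decomposition $v = v_1 + v_2$ with $v_1 \le w_W$ by comparison and $v_2 \le c\,\omega_W$ by the maximum principle, and the torsion bound for $w_W$ via \eqref{e17} is right, but the estimate you call the ``main obstacle''---the bound $c\,\omega_W(x)\lesssim c\,\frac{\pi+\pi^2}{2R_0^2}\cdot|x-x_0|^{1/2}$---is precisely the content of the lemma and is left as a sketch. What you propose (a torsion-type majorant whose source ``absorbs'' the boundary datum through a boundary layer near $\partial B_{2R_0}(x_0)$) is the right intuition, but it must actually be executed, and that execution \emph{is} the proof. Your guess that the constant $\pi+\pi^2$ arises from the perimeter $2\pi R_0$ and the area $\pi R_0^2$ of $B_{R_0}(x_0)$ is not where it comes from.

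The paper avoids the harmonic-measure detour entirely. It takes the explicit radial cutoff $\varphi(x)=\tfrac12-\tfrac12\cos\big(\pi\tfrac{|x|-R_0}{R_0}\big)$ on the annulus $B_{2R_0}(x_0)\setminus\overline B_{R_0}(x_0)$, extended by $0$ inside $B_{R_0}(x_0)$. A direct computation of $\Delta\varphi=\varphi''+\tfrac1r\varphi'$ (using $r\ge R_0$) gives $|\Delta\varphi|\le \frac{\pi^2}{2R_0^2}+\frac{\pi}{2R_0^2}=\frac{\pi+\pi^2}{2R_0^2}$; so $\pi^2$ comes from the second radial derivative of the cosine and $\pi$ from the curvature term $\tfrac1r\varphi'$, not from perimeter/area. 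After normalizing to $c=1$, one observes $v-\varphi\le 0$ on $\partial(U\cap B_{2R_0}(x_0))$ and $-\Delta(v-\varphi)\le 1+\frac{\pi+\pi^2}{2R_0^2}$, hence by comparison $v-\varphi\le\big(1+\frac{\pi+\pi^2}{2R_0^2}\big)w_{U\cap B_{2R_0}(x_0)}$, and since $\varphi\equiv 0$ on $B_{R_0}(x_0)$ the torsion bound \eqref{e17} finishes the proof. This is strictly simpler than your route: one comparison instead of a decomposition plus a still-to-be-proved harmonic measure bound. If you want to salvage your route, you can prove the $\omega_W$ estimate by the same trick---compare $\omega_W-\varphi$ to $\frac{\pi+\pi^2}{2R_0^2}\,w_W$---but then you have duplicated the paper's argument inside your own. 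Your reduction $1+cK\le c(1+K)$ for $c\ge 1$ is correct, as is the observation that components of $U\cap B_{2R_0}(x_0)$ remain simply connected so that \eqref{e17} applies.
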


\begin{proof}
It is enough to prove the inequality for $c=1$. Indeed, if $c >1$, then $\frac vc$ satisfies \eqref{vdb203} with $c=1$. Assume now that $c=1$. 

Note that $U \cap B_{2R_0}(x_0)$ may not be connected. However its connected components are simply connected sets. Let $U_1$ be such a connected component, and let $x \in U_1$. If $x_0$ does not belong to $\partial U_1$, then it is enough to prove  inequality \eqref{vdb204}, replacing the point $x_0$ by a point of the intersection of the straight line
segment $[x,x_0]$ with $\partial U_1$. Collecting all inequalities on  all components, \eqref{vdb204}  will hold on the full set $B_{R_0}(x_0)\cap U$ with a point $x_0 \in \partial U$.

 In the sequel, we assume that  $U \cap B_{2R_0}(x_0)$ is connected.  Let 
$\varphi : B_{2R_0}(x_0) \to [0,1]$ be the radial function such that
$\varphi(x)=0$ on $B_{R_0}(x_0)$, and
$$\varphi(x)= \frac 12-\frac 12 \cos\Big (\pi \frac{|x|-R_0}{R_0} \Big),\, x\in B_{2R_0}(x_0)\sm \ov B_{R_0}(x_0).$$
Note that $\varphi \in H^2(B_{2R_0}(x_0))$ and, by direct computation, that
$$\forall x \in  B_{2R_0}(x_0),\;\;  |\Delta \vphi (x)|\le \frac{\pi+\pi^2}{2R_0^2}.$$
Now, $v -\varphi \le 0$ on $\partial (U \cap B_{2R_0}(x_0))$ and
$$-\Delta (v-\vphi) \le 1+ \frac{\pi+\pi^2}{2R_0^2} \mbox {in } {\mathcal D}'(U\cap B_{2R_0}(x_0)).$$
This implies that
$$\forall x \in U\cap B_{2R_0}(x_0), \; v(x)-\vphi(x) \le \Big (1+ \frac{\pi+\pi^2}{2R_0^2} \Big) w_{U\cap B_{2R_0}(x_0)}.$$
But $U\cap B_{2R_0}(x_0)$ is a union of simply connected sets, and     \eqref{e17} gives that
$$ w_{U\cap B_{2R_0}(x_0)}(x)\le \frac 43  \pi^{-\frac 34} |B_{2R_0}(x_0)|^\frac 34 \Big (d_{\partial (U\cap B_{2R_0}(x_0))}(x)\Big)^\frac 12.$$

Since $x_0 \in \partial U$ and $\varphi (x)=0$ on $B_{R_0}(x_0)$, we get \eqref{vdb204}.
\end{proof}
We now come back to the proof of Proposition \ref{prop1}, and follow the main lines of the proof in \cite[Lemma 3.3]{LBDB} adapted to our situation and keeping track of the constants.
\begin{proof}
We assume without loss of generality that $\Om$ is also bounded and rely on the approximation $w_{\Om \cap B_R}  \to w_\Om$ strongly in $H^1$.
By rescaling, we can assume that $\ds \fint_\Om dx\, w_\Om  =1$, where $\ds \fint_\Om dx\,  w_\Om $ denotes the volume average of $w_{\Om}$ over $\Om$. Our goal is  to prove that
$$\fint_\Om dx\,w_\Om^2  \ge \frac {1}{1-\eta}.$$
We rely on the following equality
$$\fint_\Om dx\,w_\Om^2   = \Big (\fint_\Om dx\,w_\Om  \Big)^2 + \fint_\Om dx\,\Big (w_\Om-\fint_\Om dx\,w_\Om  \Big)^2= 1+  \fint_\Om dx\Big (w_\Om-1 \Big)^2.$$
Our goal is to prove that
$$\int_\Om dx\, \Big (w_\Om-1 \Big)^2 \ge \frac{\eta}{1-\eta} |\Om|.$$
We cover $\Om$ by balls as follows: $\forall x\in \Om$ we consider $B_{R(x)}(x)$ the ball centred at $x$ with radius $R(x) = d(x, \partial \Om)$. The union of all these balls coincides with $\Om$.  Using Vitali's covering theorem  (1.5.1 in \cite{EG}), there exists an at most countable family of pairwise disjoint such balls $\{B_{R_i}(x_i)\}_{i \in I}$ such that
$$\Om \subset \cup_{i \in I} B_{5R_i}(x_i).$$
Let us introduce $R_0= \sqrt{8}$, chosen such that $w_{B_{R_0}}(0)=2$. We  consider the following  cases.

\smallskip
\noindent {\bf Case 1.} Assume that
$$\sum _{i\in \I, R_i \ge R_0} |B_{R_i}(x_i)| \ge   \frac {1}{2} \cdot \frac {1}{ 5^2} |\Om|.$$
We claim that if this situation occurs then
\begin{equation}\label{vdb205}
\Big|\Big\{ w_\Om \ge \frac 32\Big\}\Big| \ge \frac 14\cdot  \frac {1}{2} \cdot \frac {1}{ 5^2} |\Om|.
\end{equation}
Indeed, since $R_i^2\ge R_0^2=8$, we have that
$$\Big|\Big\{ w_{B_{R_i}(x_i)} \ge \frac 32\Big\}\Big| = \pi (R_i^2-6)\ge \frac{\pi}{4} R_i^2= \frac 14 |B_{R_i}(x_i)|.$$
Since $w_\Om \ge w_{B_{R_i}(x_i)}$, we get \eqref{vdb205}. Then
$$\int_\Om dx\, \Big (w_\Om-1 \Big)^2 \ge \int_{\{ w_\Om \ge \frac 32\}} \frac 14 dx \ge \frac 14 \cdot \frac 14 \cdot   \frac {1}{2} \cdot \frac {1}{ 5^2}|\Om|,$$
leading to
$$\eta_1 = \frac{1}{801}.$$

\smallskip
\noindent {\bf Case 2.} Assume now that
$$\sum _{i\in \I, R_i \le R_0} |B_{R_i}(x_i)| \ge    \frac {1}{2} \cdot \frac {1}{ 5^2} |\Om|.$$
By construction, every such ball $B_{R_i}(x_i)$ touches the boundary of $\Om$. Choose one point $y_i \in \partial \Om \cap \ov B_{R_i}(x_i)$. We  consider the following  two sub-cases.

\smallskip
\noindent {\bf Case 2-a.} Assume that
$$\sum_{i\in \I} \{|B_{R_i}(x_i)| : R_i \le R_0, w_\Om \le 2 \mbox{ on } B_{2R_0}(y_i) \}  \ge  \frac {1}{2} \cdot   \frac {1}{2} \cdot \frac {1}{ 5^2} |\Om|.$$
Consequently, we can use the estimate of Lemma \ref{vdb202}  with $c=2$ 
$$\forall x \in B_{R_0}(y_i) \cap \Om,\;\;  w_\Om(x) \le 2 \Big( 1+ \frac{\pi +\pi^2}{2R_0^2}\Big)  \frac 43\pi^{-\frac 34} (4\pi R_0^2)^\frac34 |x-y_i|^\frac 12.$$
Let   $r_0$ be  such that
$$ 2 \Big( 1+ \frac{\pi +\pi^2}{2R_0^2}\Big) \frac 43\pi^{-\frac 34} (4\pi R_0^2)^\frac34 r_0^\frac 12= \frac 12.$$
Then, for $x \in B_{R_i}(x_i) \cap B_{r_0} (y_i)$ we have $w_\Om(x) \le \frac 12$. Since $R_i \le R_0$, we get
$$|B_{R_i}(x_i) \cap B_{r_0} (y_i)|\ge \frac{r_0^2}{32} |B_{R_i}(x_i) |.$$
Finally,
$$ \Big|\Big\{0<w_\Om \le \frac 12\Big\}\Big| \ge  \frac{r_0^2}{32} \cdot   \frac {1}{2} \cdot   \frac {1}{2} \cdot \frac {1}{ 5^2}|\Om|,$$
gives that
$$\eta_2: = \frac{ r_0^2}{12801} < \frac{ r_0^2}{12800+r_0^2 },$$
since $r_0<1$.

\smallskip
\noindent {\bf Case 2-b.} Assume  now that
$$ \sum_{i\in \I} \{|B_{R_i}(x_i)| : R_i \le R_0, \max\{w_\Om(x) : x\in  B_{2R_0}(y_i)\} \ge 2 \}  \ge  \frac {1}{2} \cdot   \frac {1}{2} \cdot  \frac {1}{5^2} |\Om|.$$
If $w_\Om$ is extended by $0$ on $\R^2 \sm \Om$, the function satisfies in $\R^2$, in the sense of distributions,
$$-\Delta w_\Om \le 1 \mbox{ in } {\mathcal D}' (\R^2).$$
In particular, this implies that for every point $a \in \R^2$,
$$x \mapsto w_\Om(x) + \frac{|x-a|^2}{ 4}$$
is subharmonic in $\R^2$.  Assuming that $w_\Om(a)\ge 2$, then for every $r >0$
$$2 \le \fint_{B_r(a)}dx\, \Big(w_\Om (x)+ \frac{|x-a|^2}{ 4}\Big)  \le \fint _{B_r(a)} dx\,w_\Om (x)  + \frac{r^2}{4}.$$
Choosing  $r= \sqrt{2}$, we get
$$ \fint _{B_{ \sqrt{2}}(a)} dx\, w_\Om (x)  \ge \frac 32,$$
or
$$ \fint _{B_{ \sqrt{2}}(a)} dx\, (w_\Om (x) -1) \ge \frac 12,$$
and by Cauchy-Schwarz,
$$ \int _{B_{ \sqrt{2}}(a)}dx\, (w_\Om (x) -1)^2 \ge\frac 14 |B_{\sqrt{2}}(a)|.$$
Let us denote
$$A= \cup \big \lbrace B_{R_i}(x_i) : {i\in \I, R_i \le R_0, \max\{w_\Om(x) : x\in  B_{2R_0}(y_i)\} \ge 2} \}\big \rbrace.$$
We cover the set $A$ with the balls $B_{4R_0}(y_i)$ for the index $i$ above. By Vitali's covering  theorem, there exists a family of pairwise disjoint such balls indexed by $j \in J$ such that
$$A \subset \cup_{j \in J} B_{20R_0}(y_j).$$
Note that $J$ is necessarily finite. For every such index $j$ let us denote a maximum point of $w_\Om$ in $B_{2R_0}(y_j)$ by $x_j^*$. We know that $w_\Om (x_j^*) \ge 2$. Then
$$\int_{B_{ \sqrt{2}}(x_j^*) } dx\, (w_\Om-1)^2  \ge  \frac 12 \pi $$
so
$$\int_\Om dx\,(w_\Om-1)^2  \ge card(J) \frac 12 \pi.$$
 We also have that
$$ card(J) \pi (20 R_0)^2 \ge |A| \ge \frac 14 \cdot \frac {1}{5^2} |\Om|,$$
hence
$$\int_\Om dx\,(w_\Om-1)^2  \ge \frac 12 \cdot \frac{1} {(20 R_0)^2 }  \cdot \frac 14  \cdot \frac {1}{5^2} |\Om|.$$
We get
$$\eta_3: = \frac{\frac 12 \cdot \frac{1} {(20 R_0)^2 }  \cdot \frac 14  \cdot \frac {1}{5^2}}{1+ \frac 12 \cdot \frac{1} {(20 R_0)^2 }  \cdot \frac 14  \cdot \frac {1}{5^2}}.$$
  Then $\eta=\min\{\eta_1, \eta_2, \eta_3\}$ gives that
$$\eta= \frac{ 3^4 }{12801  \cdot  2^{31} \Big( 1+ \frac{\pi +\pi^2}{2R_0^2}\Big)^4 }.$$
\end{proof}

We finish this section with the proof of Corollary \ref{cor1}.

\begin{proof}
This  is a direct consequence of Theorems \ref{the1} and \ref{the2}. By the definition of $T(\Om)$, \eqref{e6} and \eqref{e10a}, we see that
\begin{equation}\label{e10b.3}
\Phi_{1,\infty}(\Om)=\frac{F(\Om)}{\|w_{\Om}\|_{\infty}\lambda(\Om)}.
\end{equation}
{We obtain \eqref{e10b.2} by applying the upper bound in Theorem \ref{the2} to the numerator and the lower bound in Theorem \ref{the1} to the denominator in \eqref{e10b.3} respectively}.
\end{proof}

\smallskip

\noindent {\bf Acknowledgements.} The authors wish to thank Dr. Katie Gittins for helpful suggestions.

\smallskip

\noindent {\bf Author Contributions} Not applicable.

\smallskip

\noindent {\bf Funding Information} MvdB and DB were supported by The London Mathematical Society under a Scheme 4 Research in Pairs Grant Reference 42214.

\smallskip

\noindent {\bf Data Availability} Data sharing is not applicable to this article.

\smallskip

\noindent {\bf Declarations}

\smallskip

\noindent {\bf Ethical Approval} Not applicable.

\smallskip

\noindent {\bf Competing Interests} I declare that the authors have no competing interests as defined by Springer, or other
interests that might be perceived to influence the results and/or discussion reported in this paper.


\begin{thebibliography} {99}


\bibitem{RB} Ba\~nuelos, R.: Sharp estimates for Dirichlet eigenfunctions in simply connected domains. J. Differential Equations \textbf{125}, 282--298 (1996)

\bibitem{BC} Ba\~nuelos, R., Carroll, T.: Brownian motion and the fundamental frequency of a drum. Duke Math. J. \textbf{75}, 575--602 (1994)

\bibitem{BC2}Ba\~nuelos, R., Carroll, T.: The maximal expected lifetime of Brownian motion. Mathematical Proceedings of the Royal Irish Academy \textbf{111A}, 1--11 (2011)

{\bibitem{RB} Ba\~nuelos, R., Mariano, P.: On a conjecture of a P\'olya functional for triangles and rectangles, arXiv:2406.01778.}

\bibitem{MvdB4} van den Berg, M.: On the spectral counting function
for the Dirichlet Laplacian. Journal of Functional Analysis
\textbf{107}, 352--361 (1992)

\bibitem{MvdB3} van den Berg, M., Bolthausen, E.: Estimates for Dirichlet eigenfunctions. J. of
the London Mathematical Society (2) \textbf{59}, 607--619 (1999)


\bibitem{vdBC}van den Berg, M., Carroll, T.: Hardy inequality and $L^p$
estimates for the torsion function. Bull. London
Mathematical Society \textbf{41}, 980--986 (2009)

\bibitem{vdB}van den Berg, M.: Estimates for the torsion function
and Sobolev constants. Potential Analysis \textbf{36}, 607--616 (2012)

\bibitem{MvdB}van den Berg, M.:
Spectral bounds for the torsion function.
Integral Equations and Operator Theory \textbf{88}, 387--400 (2017)


\bibitem{vdB2} van den Berg, M., Ferone, V., Nitsch, C., Trombetti, C.:
On P\'olya's inequality for torsional rigidity and first Dirichlet eigenvalue. Integral Equations and Operator Theory \textbf{86}, 579--600 (2016)

\bibitem{B} Bianchi, F., Brasco, L.: The fractional Makai-Hayman inequality.
Ann. Mat. Pura Appl. \textbf{201}, 2471--2504 (2022)

\bibitem{BL} Biswas, A., L\"orinczi, J.: Universal constraints on the location of extrema of eigenfunctions of non-local Schr\"odinger operators.
J. Differential Equations \textbf{267}, 267--306 (2019)


\bibitem{BMS} Brasco, L., Magnanini, R., Salani, P.: The location of the hot spot in a grounded convex conductor.
Indiana Univ. Math. J. \textbf{60}, 633--659 (2011)

\bibitem{LBDB} Briani, L., Bucur, D.: Mean-to-max ratio of the torsion function and honeycomb structures. Calc. of Var. and Partial Differential Equations  \textbf{62}, no. 7, Paper No. 198, 29 pp. (2023)

\bibitem{LBGBFP}Briani, L., Buttazzo, G., Prinari, F.: Inequalities between torsional rigidity and principal eigenvalue of the p-Laplacian. Calc. of Var. and Partial Differential Equations \textbf{61}, no. 2, Paper No. 78, 25 pp. (2022)

\bibitem{EG}Evans, L. C., Gariepy, R. F.: Measure theory and fine properties of functions.
Textb. Math. CRC Press, Boca Raton, FL (1992)

\bibitem{HLP}Henrot, A., Lucardesi, I., Philippin, G.: On two functionals involving the maximum of the torsion function. ESAIM:
Control Optim. Calc. Var.  \textbf{24}, 1585--1604 (2018)

{\bibitem{MP}Pang, M. M. H.: Approximation of ground state eigenvalues and eigenfunctions of Dirichlet Laplacians.
Bull. London Math. Soc. \textbf{29}, 720--730 (1997)}

\bibitem{P}Payne, L. E.: Bounds for solutions of a class of quasilinear elliptic boundary value problems in terms of the torsion function.
Proc. Royal Soc. Edinburgh \textbf{88A}, 251--265 (1981)

\bibitem{PSZ} P\'{o}lya, G., Szeg\"{o}, G.: Isoperimetric Inequalities in
Mathematical Physics, Ann.\ of Math.\ Stud.\ 27, Princeton
University Press, Princeton (1951).

\bibitem{RS}Rachh, M., Steinerberger, S.:
On the location of maxima of solutions of Schr\"odinger's equation. Comm. Pure Appl. Math. \textbf{71}, 1109--1122 (2018)

\bibitem{HV}Vogt, H.: $L_{\infty}$-estimates for the torsion function and $L_{\infty}$-growth of semigroups satisfying Gaussian bounds. Potential Anal. \textbf{51}, 37--47 (2019)


\end{thebibliography}
\end{document}